\newtheorem{proposition}{Proposition}[section]
\newtheorem{theorem}[proposition]{Theorem}
\newtheorem{corollary}[proposition]{Corollary}
\theoremstyle{definition}
\newtheorem{definition}[proposition]{Definition}
\newtheorem{remark}[proposition]{Remark}
\numberwithin{equation}{section}
\def\R{\mathbb R}
\def\eb{\varepsilon}
\def\({\left(}
\def\){\right)}
\def\divv{\operatorname{div}}
\def\Dx{\Delta_x}
\def\Nx{\nabla_x}
\def\Dt{\partial_t}
\def\Bbb{\mathbb}
\def\Cal{\mathcal}
\begin{document}
 \title[Attractors for 3D damped Euler equations]{Trajectory attractors for 3D damped Euler equations and their approximation}
\author[A. Ilyin, A. Kostianko, and S. Zelik] {Alexei Ilyin${}^1$, Anna Kostianko${}^{3,4}$,
and Sergey Zelik${}^{1,2,3}$}

\subjclass[2000]{35B40, 35B45, 35L70}

\keywords{Regularized Euler equations, Bardina model, dissipative solutions, trajectory attractors}

\thanks{This work was supported by Moscow Center for Fundamental and Applied Mathematics,
Agreement with the Ministry of Science and Higher Education of the Russian Federation,
No. 075-15-2019-1623 and by the Russian Science Foundation grant No.19-71-30004 (sections 2-4).
The second  author was  partially supported  by the Leverhulme grant No. RPG-2021-072 (United Kingdom).}

\email{ilyin@keldysh.ru}
\email{a.kostianko@imperial.ac.uk}
\email{s.zelik@surrey.ac.uk}
\address{${}^1$ Keldysh Institute of Applied Mathematics, Moscow, Russia}
\address{${}^2$ University of Surrey, Department of Mathematics, Guildford, GU2 7XH, United Kingdom.}
\address{${}^3$ \phantom{e}School of Mathematics and Statistics, Lanzhou University, Lanzhou\\ 730000,
P.R. China}
\address{${}^4$  Imperial College, London SW7 2AZ, United Kingdom.}

\begin{abstract}
We study the  global attractors for the damped 3D Euler--Bardina
equations with the regularization parameter $\alpha>0$ and Ekman
damping coefficient $\gamma>0$ endowed with periodic boundary
conditions as well as their damped Euler limit $\alpha\to0$. We
prove that despite the possible non-uniqueness of solutions of the
limit Euler system and even the non-existence of such solutions in
the distributional sense, the limit dynamics of the corresponding
dissipative solutions introduced by P.\,Lions can be described in
terms of attractors of  the properly constructed trajectory
dynamical system. Moreover, the convergence of the attractors $\Cal
A(\alpha)$ of the regularized system to the limit trajectory
attractor $\Cal A(0)$  as $\alpha\to0$ is also established in terms
of the upper semicontinuity in the properly defined functional
space.
\end{abstract}

\maketitle
\small\tableofcontents

\setcounter{equation}{0}

\section{Introduction}\label{sec0}
Being the central mathematical model in hydrodynamics, the
Navier--Stokes and Euler equations permanently remain in the focus
of both the analysis of PDEs and the theory of infinite dimensional
dynamical systems and their attractors, see
\cite{B-V,Ch-V-book,Fef06,FMRT,F95,Lad,L34,lions,tao,T,T95} and the
references therein for more details. Most studied is the 2D case
where reasonable results on the global well-posedness and
regularity of solutions as well as the results on the  existence of
global attractors and their dimension are available. However, the
global well-posedness  in
 the 3D case remains a mystery and even listed by the Clay institute of mathematics as one of the Millennium problems. This mystery inspires a comprehensive study of various modifications/regularizations of the initial Navier-Stokes/Euler equations (such as Leray-$\alpha$ model, hyperviscous Navier-Stokes equations, regularizations via $p$-Laplacian, etc.), many of which have a strong physical background and are of independent interest, see e.g. \cite{Camassa,HLT10,L34,Lopes,OT07} and the references therein.
\par
In the present paper we shall be dealing
 with the following regularized 3D damped Euler system:
\begin{equation}\label{DEalpha}
\left\{
  \begin{array}{ll}
    \partial_t u+(\bar u,\nabla_x)\bar u+\gamma u+\nabla_x p=g,\  \  \\
    \operatorname{div} \bar u=0,\quad u(0)=u_0.
  \end{array}
\right.
\end{equation}
with  forcing $g$ and  Ekman damping term $\gamma u$,
$\gamma>0$ endowed with periodic boundary conditions ($x\in\Bbb T:=[-\pi,\pi]^3$). The damping term $\gamma  u$ makes the system dissipative and
is important in  various geophysical models~\cite{Ped}. Here and below
 $\bar u$ is a smoothed (filtered)
vector field related with the initial velocity field $u$ by means
of the solution of the Stokes problem
\begin{equation}\label{0.bar}
u=\bar u-\alpha\Dx \bar u,\ \ \divv\bar u=0,
\end{equation}
where $\alpha>0$ is a given small parameter. In other words,
$$
\bar u=(1-\alpha\Dx)^{-1}u.
$$
\par
System \eqref{DEalpha}, \eqref{0.bar} (at least in the conservative case $\gamma=0$) is
often referred to as the  simplified Bardina subgrid
scale model of turbulence, see \cite{BFR80,Bardina,Lay} for the derivation
of the model and further discussion, so in this paper
we  shall be calling  \eqref{DEalpha}  the damped Euler--Bardina equations.
We also mention that rewriting \eqref{DEalpha} in terms of the variable $\bar u$ gives
\begin{equation}\label{0.KV}
\Dt\bar u-\alpha\Dt\Dx\bar u+(\bar u,\Nx)\bar u+\gamma(\bar u-\alpha\Dx \bar u)+\Nx p=g
\end{equation}
which is a damped version of the so-called Navier--Stokes--Voight equations arising
in the theory of viscoelastic fluids, see \cite{Titi-Varga,Osk} for the details.
In the sequel, we will mainly use the equivalent equations \eqref{0.KV}
instead of \eqref{DEalpha}.
\par
Our main interest in the present paper is to study the limit $\alpha\to0$
in terms of the attractors $\Cal A(\alpha)$ of the corresponding regularized
equations \eqref{0.KV}. We recall that, in contrast to the classical
Navier--Stokes approximations, the global well-posedness of Bardina--Euler
equations is well-known, see \cite{Titi-Varga,Lay,IKZ} for more details. Moreover, it is also known that, if $g\in L^2(\Bbb T^3)$ and $\alpha,\gamma>0$, problem \eqref{0.KV} possesses a global attractor $\Cal A(\alpha)$ of finite fractal dimension in the phase space $\Cal H_\alpha:=H^1(\Bbb T^3)\cap\{\divv \bar u=0\}$ endowed with the norm
$$
\|u\|_{\Cal H_\alpha}^2=\|u\|^2_{L^2}+\alpha\|\Nx u\|^2_{L^2}.
$$
The fractal dimension of this attractor possesses the following explicit estimate
\begin{equation}\label{0.est1}
\dim_F\mathscr A\le\frac{1}{12\pi}\frac{\| g\|_{L^2}^2}{\alpha^{5/2}\gamma^4}\,.
\end{equation}
and this estimate is sharp with respect to $\alpha$ and $\gamma$ in
the sense that the lower bounds of the same order are attained on a
family of specially constructed Kolmogorov flows, see \cite{IKZ,IKZ1}
for more details (see also \cite{IZLap70,IKZ1, IT1,IMT} for the
analogous results for 2D case).
\par
We however note that the above results do not give much information
about the behavior of $\Cal A(\alpha)$ as $\alpha\to0$ as well as about
the limit attractor $\Cal A(0)$ because of the dependence of the above
mentioned Kolmogorov flows on $\alpha$, see \cite{IKZ} for the discussion.
In particular, even in the relatively simple 2D case, the question
about the finite-dimensionality of the limit attractor remains completely open.
Note also that, due to the simplified structure of vorticity equations in 2D, we
have the uniform as $\alpha\to0$ $H^1$-estimate for the solutions of \eqref{0.KV}
and even can establish the uniqueness of slightly more regular solutions for
the limit Euler equations using the Yudovich technique (see \cite{Ju}).
In particular, this technique gives us the uniqueness on the attractor
$\Cal A(0)$ as well as the attraction to it in a strong topology of $H^1$,
see \cite{I,CIZ,CVZ,CZ,CIZ1} for more details.
\par
In contrast to 2D case, the situation in much more complicated in 3D. Indeed,
the presence of the so-called vorticity stretching term in the vorticity
equations prevents us from obtaining good estimates of the $H^1$-norm for
the limit Euler equations, so we have only the control of the $L^2$-norm
of the solution which comes from the basic energy estimate. But, unfortunately,
this control is not only  insufficient for the uniqueness, but even
the existence of weak solutions in the sense of distributions becomes
non-trivial.  For this reason, we have to use the notion of so-called
dissipative solutions introduced by P.\,Lions in order to verify the
solvability of the limit Euler equations, see \cite{PLio} and the
references therein. Note also that the distributional solutions for the classical
3D Euler equations can be constructed using the geometric integration
technique, see \cite{Wie} and references therein, but, in contrast
to dissipative solutions, these type of solutions usually do not satisfy the
energy inequality in a reasonable form  and cannot be obtained as the limit of
the corresponding solutions of the regularized system, so they look  not
very interesting from the point of view of attractors.
\par
However, there is one more problem here. Namely, the class of dissipative
solutions of the limit Euler equations is crucially not invariant with
respect to time shifts which prevents us to use the standard technique
of trajectory attractors at least in the straightforward way.  In order
to overcome this problem, we introduce a wider (than the dissipative ones)
class  of solutions of the limit Euler equations
$$
\Cal K^+_0\subset \Theta_+:=L^\infty_{w^*,loc}(\R_+,\Cal H_0)\cap W^{1,\infty}_{w^*,loc}(\R_+,H^{-3}),
$$
see sections \S\ref{S3} and \S\ref{S4} for more details. Roughly speaking,
the trajectory phase space $\Cal K_0^+$ consists of all limits of solutions
$\bar u_{\alpha_n}$ of the regularized Bardina--Euler equations \eqref{0.KV}
as $\alpha_n\to0$ in the topology of $\Theta_+$. In this case,
the semigroup of time-shifts $T(h):\Theta_+\to\Theta_+$, $h\ge0$,
will act on $\Cal K_0^+$ and we may define the trajectory dynamical
system $(\Cal K_+,T(h))$ associated with the limit Euler equation and
may speak about the global attractor $\Cal A_{tr}(0)$ of this
DS (=trajectory attractor of equation \eqref{0.KV} with $\alpha=0$).
\par
Note that the space $\Cal K^+_0$ is not empty and contains all
dissipative solutions of the limit Euler equation, but a priori may
be larger. This may be connected with the well known fact that the
weak limits of solutions of 3D Euler equations may not satisfy Euler
equations, see \cite{DiM} for more details.
\par
Finally, in order to define the trajectory attractor $\Cal A_{tr}(0)$,
we need to specify the class of "bounded" sets which will be attracted
by this attractor. It is well-known that the topology on the space of
initial data for problem \eqref{0.KV} (with $\alpha=0$, e.g. $\bar u(0)\in H$)
is usually not appropriate for defining bounded sets of trajectories
and the topology on the set of trajectories
 ($\Theta_+$ or its uniformly local analogues) should be used instead,
 see \cite{Ch-V-book,MirZel} and references therein. However, in our
 case such a choice is also insufficient since it may lead to the
 situation when the $\omega$-limit set of $\Cal K^+_0$ will be outside
 $\Cal K_0^+$, so a bit more accuracy is necessary. We overcome this problem,
 following \cite{Zel} (see also \cite{CVZ}) by introducing the so-called
 $M$-functional
 $$
  M_{\bar u}(t):=\inf_{\bar u_{\alpha_n}\rightharpoondown \bar u}\liminf_{n\to\infty}\|\bar u_{\alpha_n}(t)\|^2_{\Cal H_{\alpha_n}},
$$
where the external $\inf$ is taken with respect to all sequences of
solutions of \eqref{0.KV} which converge to a given $\bar u\in\Cal K_0^+$,
see section \S\ref{S3} for more details.  Note that for this quantity,
we have only that  $M_{\bar u}(0)\ge \|\bar u(0)\|_{\Cal H_0}^2$, but no
upper bounds are a priori available. In particular, the value $M_{\bar u}(0)$
depends not only on $\bar u(0)$, but on the whole trajectory $\bar u(t)$, $t\ge0$.
Then, we call a set $B\subset \Cal K_0^+$ bounded if
$$
\sup_{\bar u\in B}M_{\bar u}(0)<\infty.
$$
We are now ready to state the main result of the paper.
\begin{theorem} Let $g\in L^2(\Bbb T)$ and let $\gamma>0$ be fixed.
Then, the limit damped Euler equation (which corresponds to \eqref{0.KV}
with $\alpha=0$) possesses a trajectory attractor $\Cal A_{tr}(0)$ in
the above described sense which attracts bounded sets of $\Cal K_0^+$
in the topology of $\Theta_+$. Moreover, the family of trajectory
attractors $\Cal A_{tr}(\alpha)$, $\alpha>0$   of Bardina--Euler
equations \eqref{0.KV} converges as $\alpha\to0$ to the limit
attractor $\Cal A_{tr}(0)$ in the sense of upper semicontinuity in the space $\Theta_+$.
\end{theorem}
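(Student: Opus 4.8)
The plan is to realize $(\Cal K_0^+,T(h))$ as a dissipative trajectory dynamical system in $\Theta_+$ and then to invoke the abstract existence theorem for trajectory attractors in the form of \cite{Zel,CVZ}, treating the $M$-functional as the surrogate "norm" that measures boundedness of sets of trajectories. First I would check that the construction genuinely yields a semigroup on $\Cal K_0^+$. The shift operators satisfy $T(h_1+h_2)=T(h_1)T(h_2)$ on $\Theta_+$, so the only real point is the invariance $T(h)\Cal K_0^+\subset\Cal K_0^+$. This follows from the autonomy of \eqref{0.KV}: a time shift of a solution $\bar u_\alpha$ is again a solution, and $T(h)$ is continuous in the $\Theta_+$ topology, so shifting a $\Theta_+$-limit of regularized solutions produces a $\Theta_+$-limit of shifted solutions, which again lies in $\Cal K_0^+$. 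I would also record the transformation rule $M_{T(h)\bar u}(0)=M_{\bar u}(h)$, immediate from the definition once the approximating sequences for $T(h)\bar u$ are taken to be shifts of those for $\bar u$.

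Second, and this is the technical heart, I would establish the \emph{uniform dissipative estimate}. Taking the $L^2$ inner product of \eqref{0.KV} with $\bar u$ and using $\divv\bar u=0$ to annihilate the nonlinear and pressure terms gives
\begin{equation}
\tfrac12\Dt\|\bar u\|_{\Cal H_\alpha}^2+\gamma\|\bar u\|_{\Cal H_\alpha}^2=(g,\bar u)\le\tfrac{1}{2\gamma}\|g\|_{L^2}^2+\tfrac{\gamma}{2}\|\bar u\|_{\Cal H_\alpha}^2,
\end{equation}
whence by Gronwall's inequality
\begin{equation}
\|\bar u_\alpha(t)\|_{\Cal H_\alpha}^2\le\|\bar u_\alpha(0)\|_{\Cal H_\alpha}^2\,e^{-\gamma t}+\frac{\|g\|_{L^2}^2}{\gamma^2},
\end{equation}
\emph{with constants independent of $\alpha$}. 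Passing to the $\liminf$ along an approximating sequence and then to the infimum produces the corresponding inequality for the $M$-functional,
\begin{equation}
M_{\bar u}(t)\le M_{\bar u}(0)\,e^{-\gamma t}+\frac{\|g\|_{L^2}^2}{\gamma^2},\qquad \bar u\in\Cal K_0^+.
\end{equation}
Consequently the set $\Cal B:=\{\bar u\in\Cal K_0^+:M_{\bar u}(0)\le R^2\}$, with $R^2$ slightly larger than $\|g\|_{L^2}^2/\gamma^2$, is absorbing: for every bounded $B\subset\Cal K_0^+$ one has $T(h)B\subset\Cal B$ once $h$ is large. The delicate part is justifying that the $\liminf$-infimum defining $M$ really inherits the differential inequality; I expect this to require a diagonal extraction together with the lower semicontinuity of the $\Cal H_{\alpha_n}$-norms under weak-$*$ convergence, and it is precisely here that the particular $\liminf$ definition of $M$ is essential.

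Third I would verify compactness and closedness in $\Theta_+$. The uniform bound $\|\bar u\|_{\Cal H_0}=\|\bar u\|_{L^2}\le\|\bar u\|_{\Cal H_{\alpha_n}}$ makes $\Cal B$ bounded in $L^\infty(\R_+,\Cal H_0)$, hence weak-$*$ precompact, while the equation supplies a uniform bound on $\Dt\bar u$ in $L^\infty(\R_+,H^{-3})$ (the nonlinearity $(\bar u,\Nx)\bar u$ being controlled in $H^{-3}$ by $\|\bar u\|_{L^2}^2$), so limit points remain in $\Theta_+$. Since $\Cal K_0^+$ is by construction closed under $\Theta_+$-convergence of sequences of regularized solutions, a diagonal argument shows that $\Cal B$ is compact in $\Theta_+$ and $T(h)\Cal B\subset\Cal B$. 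The abstract theorem then yields the trajectory attractor $\Cal A_{tr}(0)=\bigcap_{h\ge0}\overline{\bigcup_{\tau\ge h}T(\tau)\Cal B}$, which attracts every bounded subset of $\Cal K_0^+$ in $\Theta_+$ and is characterized as the union of all bounded complete trajectories lying in the full-line analogue of $\Cal K_0^+$.

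Finally, for the upper semicontinuity I would argue by contradiction. If it fails there exist $\eb>0$, a sequence $\alpha_n\to0$ and trajectories $\bar u_n\in\Cal A_{tr}(\alpha_n)$ with $\operatorname{dist}_{\Theta_+}(\bar u_n,\Cal A_{tr}(0))\ge\eb$. Each $\bar u_n$ is a complete bounded trajectory of \eqref{0.KV} with parameter $\alpha_n$, and the uniform estimate bounds all of them in $\Cal H_{\alpha_n}$ by $R$; hence they are uniformly bounded in $\Theta_+$ and, after passing to a subsequence, converge in $\Theta_+$ to some $\bar u$. By the very definition of $\Cal K_0^+$ (and of its full-line version) the limit $\bar u$ is a bounded complete trajectory of the limit system, so $\bar u\in\Cal A_{tr}(0)$, contradicting $\operatorname{dist}_{\Theta_+}(\bar u_n,\Cal A_{tr}(0))\ge\eb$. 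This gives $\sup_{\bar u\in\Cal A_{tr}(\alpha)}\operatorname{dist}_{\Theta_+}(\bar u,\Cal A_{tr}(0))\to0$ as $\alpha\to0$, i.e. upper semicontinuity. The main obstacle throughout is the $M$-functional: since $M$ is neither continuous nor a genuine norm and controls only a weak lower bound on the energy, proving its dissipation inequality and checking that the absorbing set it defines is simultaneously $T(h)$-invariant and $\Theta_+$-compact is where the real work lies.
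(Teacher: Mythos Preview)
Your proposal is correct and follows essentially the same route as the paper: verify shift-invariance of $\Cal K_0^+$, derive an $M$-functional dissipation inequality to produce an absorbing set, use Banach--Alaoglu plus a diagonal argument for compactness/closedness, invoke the abstract attractor existence theorem, and obtain upper semicontinuity by extracting a convergent subsequence of complete trajectories. Two minor remarks: first, only the inequality $M_{T(h)\bar u}(0)\le M_{\bar u}(h)$ holds in general (not equality), since approximating sequences for $T(h)\bar u$ need not arise as shifts, but this inequality is all you use; second, the paper obtains the $M$-dissipation estimate by specializing the variational inequality for $M_{\tilde u,\varphi}$ to $\varphi\equiv0$ rather than by passing the raw energy estimate through $\liminf$ and $\inf$ as you do, though the two computations are equivalent here.
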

Note that in the case $\alpha>0$, we have the global well-posedness
for problem \eqref{0.KV} and even the existence of {\it global} attractors
$\Cal A(\alpha)$ in the strong topology of $\Cal H_\alpha$, so the
construction of the corresponding trajectory attractors is straightforward.
\par
The paper is organized as follows. In section \S\ref{S01}, we briefly
discuss the known facts on the well-posedness of Bardina--Euler equations on
the 3D torus, state the basic dissipative estimates as well as the results
on the existence of global ($\Cal A(\alpha)$) and trajectory
($\Cal A_{tr}(\alpha)$) attractors for the case $\alpha>0$.
\par
In section \S\ref{S1}, we recall the basic facts about dissipative
solutions adapted to the case of the damped Euler equations and discuss
the existence of dissipative solutions for this equation as well as
their standard properties.
 \par
 In section \S\ref{S3}, we construct the trajectory dynamical system $(\Cal K_0^+,T(h))$ associated with the limit damped Euler equations, introduce the $M$-functional and study its basic properties.
 \par
 Finally, the proof of the main theorem is given in section \S\ref{S4}.

\section{Preliminaries}\label{S01}  We study the following 3D damped
Bardina--Euler system:
\begin{equation}\label{1.DBE}
\begin{cases}
\Dt \bar u-\alpha\Dt\Dx\bar u+(\bar u,\Nx)\bar u+\Nx p+\gamma (\bar u-\alpha\Dx\bar u)=g,\\
 \divv\bar u=0,\ \ \bar u\big|_{t=0}=\bar u_0
\end{cases}
\end{equation}
on the 3D torus $\Bbb T:=(-\pi,\pi)^3$.
Here $\bar u=(\bar u_1,\bar u_2,\bar u_3)$ is an unknown velocity field,
$p$ is an unknown pressure, $\Dx$ is the Laplacian with
respect to the variable $x\in\Bbb T$ and $g$ are the given external forces.
We assume that the parameters $\gamma>0$ (Ekman damping)
and $\alpha>0$ (Bardina regularization) are given and will study the damped
Euler limit $\alpha\to0$. It is also natural to assume that
\begin{equation}\label{2.g}
g\in L^2(\Bbb T)
\end{equation}
and take the initial data
\begin{equation}\label{3.h}
\bar u_0\in \Cal H_\alpha:=\{v\in[H^1(\Bbb T)]^3,\ :\ \divv v=0\}
\end{equation}
endowed with the norm
\begin{equation}
\|v\|^2_{\Cal H_\alpha}:=\|v\|^2_{L^2}+\alpha\|\Nx v\|^2_{L^2}.
\end{equation}
The following result concerning the well-posedness of problem \eqref{1.DBE} is straightforward, see e.g. \cite{Titi-Varga,Lay,IKZ}.
\begin{theorem}\label{Th1.ex} Let $g\in L^2(\Bbb T)$ and let $\alpha,\gamma>0$.
Then, for every $\bar u_0\in\Cal H_\alpha$, problem \eqref{1.DBE} possesses a unique solution
 $$
 \bar u\in C^1(0,T;\Cal H_\alpha), \ T>0
 $$
 and this solution satisfies the following dissipative estimate
 \begin{equation}\label{1.dis}
 \|\bar u(t)\|_{\Cal H_\alpha}^2\le \|\bar u_0\|_{\Cal H_\alpha}^2e^{-\gamma t}+\frac1{\gamma^2}\|g\|^2_{L^2}.
 \end{equation}
\end{theorem}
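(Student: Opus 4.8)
The plan is to recast \eqref{1.DBE} as an ordinary differential equation in the Hilbert space $\Cal H_\alpha$, to obtain local well-posedness from the Cauchy--Lipschitz theorem, and then to upgrade this to a global statement together with \eqref{1.dis} by a single energy estimate. First I would apply the Leray--Helmholtz projector $P$ onto divergence-free fields to eliminate the pressure. Since $P$ commutes with $\Dx$ and acts as the identity on $\Cal H_\alpha$, the system reduces to
\[
(1-\alpha\Dx)\Dt\bar u+\gamma(1-\alpha\Dx)\bar u+P\divv(\bar u\otimes\bar u)=Pg,
\]
where the convective term has been written in divergence form using $\divv\bar u=0$. Applying the smoothing operator $(1-\alpha\Dx)^{-1}$, which is an isomorphism of $\Cal H_\alpha$ and gains two spatial derivatives, yields the equivalent evolution equation
\[
\Dt\bar u=-\gamma\bar u-N(\bar u)+f,\qquad N(\bar u):=(1-\alpha\Dx)^{-1}P\divv(\bar u\otimes\bar u),\quad f:=(1-\alpha\Dx)^{-1}Pg.
\]

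Second, I would verify that the quadratic nonlinearity $N$ is a bounded, locally Lipschitz map from $\Cal H_\alpha$ into itself; this is the only genuinely analytic point and the place where the regularization $\alpha>0$ is essential. In three dimensions $H^1(\Bbb T)\hookrightarrow L^4(\Bbb T)$, so for $\bar u\in\Cal H_\alpha$ the tensor field $\bar u\otimes\bar u$ lies in $L^2$; hence $P\divv(\bar u\otimes\bar u)\in H^{-1}$, and since $(1-\alpha\Dx)^{-1}$ maps $H^{-1}$ into $H^1$ we obtain $N(\bar u)\in\Cal H_\alpha$ with $\|N(\bar u)\|_{\Cal H_\alpha}\le C_\alpha\|\bar u\|_{\Cal H_\alpha}^2$. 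Writing $N(\bar u)=B(\bar u,\bar u)$ for the associated bounded bilinear map $B$, bilinearity gives the Lipschitz bound $\|N(\bar u_1)-N(\bar u_2)\|_{\Cal H_\alpha}\le C_\alpha(\|\bar u_1\|_{\Cal H_\alpha}+\|\bar u_2\|_{\Cal H_\alpha})\|\bar u_1-\bar u_2\|_{\Cal H_\alpha}$. The Cauchy--Lipschitz theorem then produces a unique maximal solution $\bar u\in C^1([0,T_{\max});\Cal H_\alpha)$, and uniqueness in the stated class follows at once.

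Third, the global existence and the dissipative bound come from one energy estimate. Taking the $L^2$-scalar product of \eqref{1.DBE} with $\bar u$, the pressure term drops out by $\divv\bar u=0$, the convective term vanishes by the standard cancellation $((\bar u,\Nx)\bar u,\bar u)=-\tfrac12\int(\divv\bar u)|\bar u|^2=0$, and integrating the two time-derivative terms by parts gives
\[
\tfrac12\tfrac{d}{dt}\|\bar u\|^2_{\Cal H_\alpha}+\gamma\|\bar u\|^2_{\Cal H_\alpha}=(g,\bar u)\le\tfrac{1}{2\gamma}\|g\|^2_{L^2}+\tfrac{\gamma}{2}\|\bar u\|^2_{\Cal H_\alpha}.
\]
Hence $\tfrac{d}{dt}\|\bar u\|^2_{\Cal H_\alpha}+\gamma\|\bar u\|^2_{\Cal H_\alpha}\le\tfrac1\gamma\|g\|^2_{L^2}$, and Gronwall's inequality yields exactly \eqref{1.dis}. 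In particular the $\Cal H_\alpha$-norm stays bounded on every finite interval, so $T_{\max}=\infty$ by the standard continuation criterion, and the $C^1$-regularity is read off directly from the evolution equation, whose right-hand side is continuous in $\Cal H_\alpha$ along the trajectory.

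The main (and essentially only) obstacle is the nonlinearity estimate of the second step: it is precisely the two-derivative smoothing of $(1-\alpha\Dx)^{-1}$ that tames the supercritical three-dimensional convective term and renders the problem globally well-posed, in sharp contrast to the genuine Euler and Navier--Stokes equations where this mechanism is absent. Everything else is routine, which is why the statement is recorded as straightforward.
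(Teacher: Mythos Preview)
Your argument is correct and is precisely the standard route: rewrite \eqref{1.DBE} as an ODE in $\Cal H_\alpha$ via the Leray projector and the smoothing operator $(1-\alpha\Dx)^{-1}$, invoke Cauchy--Lipschitz after checking that the bilinear map is bounded on $H^1$ (which in 3D rests on $H^1\hookrightarrow L^4$), and then globalize through the energy identity that yields \eqref{1.dis}. The paper itself does not supply a proof of Theorem~\ref{Th1.ex}; it simply records the statement as ``straightforward'' and defers to \cite{Titi-Varga,Lay,IKZ}, whose arguments follow exactly the pattern you describe. So there is nothing to compare: your proof \emph{is} the standard one the paper has in mind.
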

Thus, problem \eqref{1.DBE} defines a dissipative solution semigroup $S_\alpha(t)$ in the phase space $\Cal H_\alpha$ via:
\begin{equation}\label{1.sem}
S_\alpha(t):\Cal H_\alpha\to\Cal H_\alpha,\ \ \ S_\alpha(t)\bar u_0:=\bar u(t),
\end{equation}
where $\bar u(t)$ solves \eqref{1.DBE} with the initial data $\bar u(0)=\bar u_0$.
\par
Moreover, as proved e. g., in \cite{IKZ}, this semigroup possesses a global attractor $\Cal A_\alpha$ in the space $\Cal H_\alpha$ for every $\alpha>0$. The latter means that
\par
1. $\Cal A_\alpha$ is compact in $\Cal H_\alpha$;
\par
2. It is strictly invariant: $S_\alpha(t)\Cal A_\alpha=\Cal A_\alpha$ for all $t>0$;
\par
3. It attracts the images of all bounded sets of $\Cal H_\alpha$, i.e., for any bounded set $B\subset \Cal H_\alpha$ and any neighbourhood $\Cal O(\Cal A_\alpha)$, there is time $T=T(B,\Cal O)$ such that
$$
S_\alpha(t)B\subset\Cal O(\Cal A_\alpha), \ \ t\ge T.
$$
We also recall that the attractor $\Cal A_\alpha$  is generated by all bounded complete trajectories of problem \eqref{1.DBE}:
\begin{equation}
\Cal A_\alpha=\Cal K_\alpha\big|_{t=0},
\end{equation}
where $\Cal K_\alpha:=\{\bar u\in C_b(\R,\Cal H_\alpha),\, \bar u(t) \text{ solves \eqref{1.DBE} for all }t\in\R\}$.
\par
Our aim is to study the limit $\alpha\to0$. However, in
 contrast to the case of \eqref{1.DBE}, we do not know whether
 or not the solution of the limit damped Euler equations:
\begin{equation}\label{1.DE}
\begin{cases}
\Dt\bar u+(\bar u,\Nx)\bar u+\Nx p+\gamma\bar u=g,\\
\divv \bar u=0,\ \ \bar u\big|_{t=0}=\bar u_0
\end{cases}
\end{equation}
is unique (and even do not have enough regularity to understand this solution in the sense of distributions, see section \S\ref{S1}). For this reason, we briefly recall below the so-called trajectory approach developed in \cite{Ch-V-book} which allows us to overcome the potential non-uniqueness.
\begin{definition} Let $\Cal K_\alpha^+\subset C^1_{loc}(\R_+,\Cal H_\alpha)$ be
the set of all global solutions $\bar u\in C^1_{loc}(\R_+,\Cal H_\alpha)$ of
problem \eqref{1.DBE} which correspond to all possible initial
data $\bar u_0\in\Cal H_\alpha$. This set is called the trajectory phase
space associated with equation \eqref{1.DBE}.
Obviously, the translation semigroup
$$
(T(h)\bar u)(t):=\bar u(t+h),\ \ t,h\ge0
$$
acts on $\Cal K^+_\alpha$:
$$
T(h):\Cal K^+_\alpha\to\Cal K^+_\alpha,\ \ h\ge0,
$$
so the dynamical system $(T(h),\Cal K_\alpha^+)$ is well-defined.
We endow the set $\Cal K_\alpha^+$ with the topology of $C_{loc}(\R_+,\Cal H_\alpha)$
and will refer to the constructed dynamical system $(T(h),\Cal K_\alpha^+)$ as the
trajectory dynamical system associated with equation \eqref{1.DBE}.
\end{definition}
\begin{remark} According to the above constructions,
equation \eqref{1.DBE} generates two dynamical systems (DS): one of them
is the classical DS $(S_\alpha(t),\Cal H_\alpha)$ which is defined
on a usual phase space $\Cal H_\alpha$ of the problem and the second
one is the trajectory DS $(T(t),\Cal K_\alpha^+)$ which is generated by
the translations of the corresponding trajectories.  In the case where
the uniqueness is proved, these two approaches are equivalent. Indeed,
as not difficult to see, the solution map
$$
S:\Cal H_\alpha\to\Cal K_\alpha^+,\ \ S\bar u_0:=\bar u(\cdot)
$$
is a homeomorphism and also maps bounded sets into bounded sets (if we define bounded sets in $\Cal K_\alpha^+$ using the embedding $\Cal K_\alpha^+\subset C_{loc}(\R_+,\Cal H_\alpha)$). Moreover,
\begin{equation}
T(t)=S\circ S_\alpha(t)\circ S^{-1},\ \ \text{on}\ \ \Cal K_\alpha^+.
\end{equation}
Thus, the trajectory dynamical system $(T(t),\Cal K_\alpha^+)$ also possesses a global attractor $\Cal A_{tr}(\alpha)$ which is called the trajectory attractor of equation \eqref{1.DBE} and has the following structure:
\begin{equation}
\Cal A_{tr}(\alpha)=S\Cal A_\alpha=\Cal K_\alpha\big|_{t\ge0},
\end{equation}
see \cite{Ch-V-book,MirZel} for more details. The advantage of the trajectory approach is that the uniqueness of a solution is {\it not necessary} for constructing the trajectory dynamical system and this will allow us to construct and study the limit attractor $\Cal A_{tr}(0)$ of the damped Euler equations.
\end{remark}

\section{Preliminaries II. The case $\alpha=0$: dissipative solutions}\label{S1}
In this section, we recall the construction of the so-called
{\it dissipative} solutions for the limit damped Euler equations \eqref{1.DE}
 introduced in \cite{PLio}. We remind that, according to estimate \eqref{1.dis},
we are able to control the $L^\infty(\R_+,L^2(\Bbb T))$-norm of the
approximate solutions $\bar u$ of \eqref{1.DBE} as $\alpha\to0$,
so it is natural to consider the solutions of the limit problem belonging
to this space. We are even able to define distributional solutions
in this space using the identity
$$
((\bar u,\Nx)\bar u,\varphi)=-(\bar u\otimes\bar u:\Nx\varphi),
$$
see \cite{PLio} for more details. However, the obtained regularity $u\in L^\infty(\R_+,\Cal H_0)$ is not sufficient for passing to the limit in the nonlinear term $\bar u\otimes\bar u$, so the existence of distributional solutions for the 3D Euler equation remains an open problem. The above mentioned construction of dissipative solutions allows us to overcome this problem. Namely, let $\varphi$ be a sufficiently smooth divergent free test function and let
\begin{equation}\label{2.D}
D_\alpha(\phi)=D_\alpha(\phi)(t):=\Dt \varphi-\alpha\Dt\Dx\varphi+\Pi(\varphi,\Nx)\varphi+\gamma(\varphi-\alpha\Dx\varphi)-\Pi g,
\end{equation}
where $\Pi$ is the Leray projector to divergence free vector fields.
Then, if $\bar u$ solves the approximation problem \eqref{1.DBE}, we have the identity
\begin{multline}
\Dt(\bar u-\varphi)-\alpha\Dt\Dx(\bar u-\varphi)+\gamma(\bar u-\varphi-\alpha\Dx(\bar u-\varphi))+\\+\Pi[(\bar u,\Nx)\bar u-(\varphi,\Nx)\varphi]+D_\alpha(\varphi)=0.
\end{multline}
Multiplying this identity by $\bar u-\varphi$ and integrating in space, we arrive at
\begin{equation}\label{2.dif}
\frac12\frac d{dt}\|\bar u-\varphi\|^2_{\Cal H_\alpha}+\gamma\|\bar u-\varphi\|^2_{\Cal H_\alpha}+((\bar u-\varphi,\Nx)\varphi,\bar u-\varphi)+(D_\alpha(\varphi),\bar u-\varphi)=0
\end{equation}
Let us define the quantity
\begin{equation}\label{2.e}
e_\alpha(\varphi)(t):=\sup_{z\in\Cal H_\alpha}\frac{-((z,\Nx)\varphi(t),z)}{\|z\|^2_{\Cal H_\alpha}}.
\end{equation}
Then equality \eqref{2.dif} implies the desired differential inequality
\begin{equation}
\frac12\frac d{dt}\|\bar u-\varphi\|^2_{\Cal H_\alpha}+(\gamma-e_\alpha(\varphi)(t))\|\bar u-\varphi\|^2_{\Cal H_\alpha}+(D_\alpha(\varphi),\bar u-\varphi)\le0
\end{equation}
and integrating this in time we finally arrive at the inequality
\begin{multline}\label{2.sol-dis}
\|\bar u(t)-\varphi(t)\|^2_{\Cal H_\alpha}\le \|\bar u(0)-\varphi(0)\|^2_{\Cal H_\alpha}e^{-2\int_0^t(\gamma-e_\alpha(\varphi)(s))\,ds}-\\-
2\int_0^te^{-2\int_\tau^t(\gamma-e_\alpha(\phi)(s))\,ds}(D_\alpha(\varphi)(\tau),\bar u(\tau)-\varphi(\tau))\,d\tau
\end{multline}
We are now ready to give the definition of dissipative solutions.
\begin{definition} Let $\alpha\ge0$ and let $\bar u_0\in\Cal H_\alpha$.
Then the function
\begin{equation}\label{2.spa}
\bar u\in L^\infty_{loc}(\R_+,\Cal H_\alpha)\cap C_{w,loc}(\R_+,\Cal H_\alpha)
\end{equation}
is a {\it dissipative} solution of problem \eqref{1.DBE} (or \eqref{1.DE} if $\alpha=0$) if, for every smooth divergent free test function $\varphi$ and every $t\ge0$,  inequality \eqref{2.sol-dis} holds.
\end{definition}
The next proposition shows that the concept of dissipative solutions gives
nothing new on the level of the Bardina approximations.
\begin{proposition} Let $\alpha>0$ and let $\bar u_0\in\Cal H_\alpha$. Then the dissipative solution of \eqref{1.DBE} is unique and coincides with the solution $\bar u(t)$ of this problem constructed in Theorem  \ref{Th1.ex}.
\end{proposition}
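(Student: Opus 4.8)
The plan is to split the statement into two claims: (i) the classical solution $\bar u$ furnished by Theorem~\ref{Th1.ex} is itself a dissipative solution, and (ii) it is the only one. Claim (i) is essentially already contained in the computation preceding the definition: the inequality \eqref{2.sol-dis} was derived for an arbitrary genuine solution $\bar u$ of \eqref{1.DBE} tested against an arbitrary smooth divergence-free $\varphi$, so the solution of Theorem~\ref{Th1.ex} satisfies \eqref{2.sol-dis} by construction and is therefore dissipative.

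For the uniqueness (ii), let $v$ be any dissipative solution with the same initial value $v(0)=\bar u_0$, and let $\bar u$ be the classical solution. The key idea is to use $\varphi=\bar u$ as the test function in \eqref{2.sol-dis} written for $v$. Since $\bar u$ solves \eqref{1.DBE}, applying the Leray projector to the equation (which kills $\Nx p$ and leaves the divergence-free fields unchanged) shows that the defect $D_\alpha(\bar u)$ defined in \eqref{2.D} vanishes identically; and since $v(0)=\bar u(0)=\bar u_0$ the first term on the right-hand side of \eqref{2.sol-dis} is also zero. Hence \eqref{2.sol-dis} collapses to $\|v(t)-\bar u(t)\|^2_{\Cal H_\alpha}\le 0$ for every $t\ge0$, which forces $v\equiv\bar u$ and settles both the uniqueness and the coincidence with the solution of Theorem~\ref{Th1.ex}.

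The one point that needs care — and the main obstacle — is that the definition only grants \eqref{2.sol-dis} for \emph{smooth} $\varphi$, whereas the classical solution is a priori only of class $C^1_{loc}(\R_+,\Cal H_\alpha)$, i.e. $C^1$ in time with values in $H^1$. I would therefore justify the substitution $\varphi=\bar u$ by a density argument: on the torus one takes the Fourier truncations of $\bar u$ (followed, if the definition insists on $C^\infty$ regularity in time, by a short mollification in time), obtaining smooth divergence-free fields $\varphi_n\to\bar u$ in $C^1([0,T],H^1)$ for every $T>0$, and then passes to the limit $n\to\infty$ in each term of \eqref{2.sol-dis}. The quadratic terms $\|v(\cdot)-\varphi_n(\cdot)\|^2_{\Cal H_\alpha}$ converge since $\varphi_n\to\bar u$ in $C([0,T],\Cal H_\alpha)$; the exponential weights converge because the quantity $e_\alpha(\varphi)$ in \eqref{2.e} depends Lipschitz-continuously on $\Nx\varphi$ in $L^2$ (indeed $e_\alpha(\varphi)(t)\le \tfrac{C}{\alpha}\|\Nx\varphi(t)\|_{L^2}$, using $\|z\|_{L^4}^2\le C\|z\|_{H^1}^2\le \tfrac{C}{\alpha}\|z\|_{\Cal H_\alpha}^2$ in three dimensions).

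The genuinely delicate term is the last one, namely $\int_0^t e^{-2\int_\tau^t(\gamma-e_\alpha(\varphi_n))\,ds}\,(D_\alpha(\varphi_n),v-\varphi_n)\,d\tau$, for which I must establish $D_\alpha(\varphi_n)\to D_\alpha(\bar u)=0$ in $L^1(0,T;H^{-1})$. The linear contributions $\Dt(\varphi_n-\alpha\Dx\varphi_n)$ and $\gamma(\varphi_n-\alpha\Dx\varphi_n)$ converge in $C([0,T],H^{-1})$ directly from $\varphi_n\to\bar u$ in $C^1([0,T],H^1)$, while for the nonlinearity I would combine $\varphi_n\to\bar u$ in $H^1\hookrightarrow L^6$ with $\Nx\varphi_n\to\Nx\bar u$ in $L^2$ to get $(\varphi_n,\Nx)\varphi_n\to(\bar u,\Nx)\bar u$ in $L^{3/2}\hookrightarrow H^{-1}$. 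Since the weights are uniformly bounded on $[0,T]$ and the factor $v-\varphi_n$ stays bounded in $L^\infty(0,T;H^1)$, this last integral tends to $0$. Once the limit is justified, \eqref{2.sol-dis} holds with $\varphi=\bar u$ and the reasoning of the second paragraph concludes the proof.
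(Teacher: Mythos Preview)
Your proposal is correct and follows essentially the same route as the paper: take $\varphi=\bar u$ in \eqref{2.sol-dis}, observe that $D_\alpha(\bar u)=0$ and $\bar u(0)=v(0)$, and justify the non-smooth test function by density. The paper simply asserts that $D_\alpha(\bar u)\in C_{loc}(\R_+,H^{-1})$ and $e_\alpha(\bar u)\in L^1_{loc}$ and invokes ``standard density arguments'', whereas you spell out the approximation via Fourier truncations and the convergence of each term; this extra detail is welcome but does not change the underlying strategy.
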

\begin{proof} Indeed, let $\tilde u(t)$ be a dissipative solution of \eqref{1.DBE}. Following \cite{PLio}, the idea of the proof is just to take $\varphi=\bar u$. To this end we note that, due to the fact that $\bar u\in C^1_{loc}(\R_+,H^1)$, we have $D_\alpha(\bar u)\in C_{loc}(\R_+,H^{-1})$ and $e_\alpha(\bar u)\in L^1_{loc}(\R_+)$. Thus, inequality \eqref{2.sol-dis} is well-defined for $\varphi=\bar u$ and can be justified by the standard density arguments (approximating $\bar u$ by smooth divergence free functions). Taking finally $\varphi=\bar u$, we see that $D_\alpha(\varphi)=0$ and $\tilde u(t)\equiv\bar u(t)$ for all $t$.
\end{proof}
\begin{remark}\label{Rem2.good} In contrast to the case $\alpha>0$,
the regularity of the distributional solution $\bar u(t)$ of \eqref{1.DE}
is not enough to verify that
$D_0(\bar u)\in L^1_{loc}(\R_+,L^2)$ and $e_0(\bar u)\in L^1_{loc}$,
therefore the above given proof does not work for $\alpha=0$. However,
it gives the so-called weak-strong uniqueness in the class of
dissipative solutions. Namely, if a {\it sufficiently regular}
solution $\bar u(t)$ of damped Euler equations \eqref{1.DE} is given
(e.g., the solution which satisfies the Beale--Kato--Majda criterion),
then this solution is unique in the class of dissipative solutions,
see \cite{PLio} for more details.
\par
It is also not difficult to show that the dissipative solution $\tilde u(t)$ of the limit Euler equations solves \eqref{1.DE} in the sense of distributions if it is regular enough.  Indeed, let us take $\varphi_\eb(t):=\tilde u+\eb\theta$, where $\eb\in\R_+$ is a small parameter and $\theta$ is an arbitrary smooth divergent free function. Then, dividing \eqref{2.sol-dis} by $\eb$ and passing to the limit $\eb\to0$, we arrive at
$$
\int_0^te^{-2\int_\tau^t(\gamma-e_0(\tilde u)(s))\,ds}(D_0(\tilde u)(\tau),\theta(\tau))\,d\tau=0
$$
for every test function $\theta$. This gives $D_0(\tilde u)\equiv0$ and $\tilde u$ is a distributional solution of \eqref{1.DE}. Thus, despite a bit unusual form, the concept of a dissipative solution is a natural and convenient generalization of distributional solutions.
\par
In addition, there is a natural connection between dissipative and distributional solutions. Indeed, as shown in \cite{BarTi} (for the case $\gamma=g=0$, but the general case is analogous), any distributional solution $\tilde u$ of \eqref{1.DE} which satisfies the energy inequality:
$$
\|\tilde u(t)\|^2_{\Cal H_0}\le\|\tilde u(0)\|^2_{\Cal H_0}e^{-2\gamma t}+2\int_0^te^{-2\gamma(t-s)}(g,\tilde u(s))\,ds
$$
is automatically a dissipative solution.
\end{remark}
We conclude this section by establishing the existence of a dissipative solution.

\begin{proposition}\label{Prop2.sol} Let $\bar u_0\in \Cal H_0$. Then
there exists at least one dissipative solution of damped Euler equations \eqref{1.DE}.
\end{proposition}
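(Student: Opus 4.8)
The plan is to construct a dissipative solution of the limit equation \eqref{1.DE} as a weak limit of the Bardina--Euler solutions $\bar u_\alpha$ as $\alpha\to0$, exploiting that for $\alpha>0$ these solutions already satisfy the defining inequality \eqref{2.sol-dis}. First I would fix $\bar u_0\in\Cal H_0$ and choose approximating initial data $\bar u_{0}^{\alpha}\in\Cal H_\alpha$ (for instance $\bar u_0$ itself, which lies in $\Cal H_\alpha$ when $\bar u_0\in H^1$, or a mollification otherwise) such that $\bar u_0^\alpha\to\bar u_0$ in $L^2$ and $\|\bar u_0^\alpha\|^2_{\Cal H_\alpha}=\|\bar u_0^\alpha\|^2_{L^2}+\alpha\|\Nx\bar u_0^\alpha\|^2_{L^2}$ stays bounded as $\alpha\to0$. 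Let $\bar u_\alpha$ denote the unique solution from Theorem~\ref{Th1.ex}. The dissipative estimate \eqref{1.dis} then gives a uniform bound on $\|\bar u_\alpha(t)\|_{\Cal H_\alpha}$, and in particular a uniform $L^\infty(\R_+,L^2(\Bbb T))$ bound, together with $\alpha\|\Nx\bar u_\alpha\|^2_{L^2}$ uniformly bounded.

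Next I would extract a limit. From the uniform $L^\infty(\R_+,L^2)$ bound I pass to a subsequence $\bar u_{\alpha_n}\stackrel{*}{\rightharpoonup}\bar u$ in $L^\infty_{loc}(\R_+,\Cal H_0)$. To upgrade this to the weak continuity \eqref{2.spa}, i.e. $\bar u\in C_{w,loc}(\R_+,\Cal H_0)$, I would use the equation to control the time derivative in a negative-order space: rewriting \eqref{1.DBE} shows $\Dt(\bar u_\alpha-\alpha\Dx\bar u_\alpha)$ is bounded in $L^\infty_{loc}(\R_+,H^{-s})$ for suitable $s$ (the transport term $(\bar u,\Nx)\bar u$ is controlled in $H^{-1}$ via $\|\bar u\otimes\bar u\|_{L^1}\le\|\bar u\|_{L^2}^2$, and one uses $H^{-1}\hookrightarrow H^{-s}$). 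The operator $(1-\alpha\Dx)^{-1}$ is a uniformly bounded smoothing, so the term $\alpha\Dx\bar u_\alpha\to0$ in a negative norm and one obtains equicontinuity of $\bar u_\alpha$ into some $H^{-s}$; combined with the $L^2$ bound and an Aubin--Lions/Arzel\`a--Ascoli argument this yields the required weak-in-time continuity of the limit.

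Finally, the crux: I must show $\bar u$ satisfies \eqref{2.sol-dis} for every smooth divergence-free $\varphi$ and every $t\ge0$. For each fixed such $\varphi$ the solution $\bar u_{\alpha_n}$ satisfies \eqref{2.sol-dis} with $\Cal H_{\alpha_n}$, $D_{\alpha_n}(\varphi)$ and $e_{\alpha_n}(\varphi)$; I would pass to the limit term by term. The key observations are that $\|\cdot\|_{\Cal H_\alpha}^2\to\|\cdot\|^2_{L^2}=\|\cdot\|^2_{\Cal H_0}$ on the fixed smooth function $\varphi$ and, for the lower-semicontinuous left-hand side, $\liminf_n\|\bar u_{\alpha_n}(t)-\varphi(t)\|^2_{\Cal H_{\alpha_n}}\ge\|\bar u(t)-\varphi(t)\|^2_{\Cal H_0}$ by weak lower semicontinuity of the norm (here one discards the nonnegative $\alpha\|\Nx(\bar u_{\alpha_n}-\varphi)\|^2$ contribution, which only helps the inequality in the correct direction); that $D_{\alpha_n}(\varphi)\to D_0(\varphi)$ strongly in $L^2$ (the $\alpha$-terms in \eqref{2.D} vanish) so the pairing $(D_{\alpha_n}(\varphi),\bar u_{\alpha_n}-\varphi)$ converges against the weak limit; and that $e_{\alpha_n}(\varphi)\to e_0(\varphi)$ uniformly in $t$ on compact intervals, since $\varphi$ is smooth and $e_\alpha(\varphi)$ is a Rayleigh-type quotient that depends continuously on $\alpha$ (one checks the supremum defining $e_\alpha$ converges; replacing the $\Cal H_\alpha$-norm in the denominator by the $\Cal H_0$-norm changes the quotient by a controlled factor $\to1$).

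The main obstacle I expect is precisely this passage to the limit in inequality \eqref{2.sol-dis}: the left-hand side is only weakly lower semicontinuous while the right-hand side contains the exponential weights $e^{-2\int(\gamma-e_\alpha(\varphi))}$ that must be handled uniformly, and the convergence $e_{\alpha_n}(\varphi)\to e_0(\varphi)$ together with the convergence of the initial term $\|\bar u_{\alpha_n}(0)-\varphi(0)\|^2_{\Cal H_{\alpha_n}}\to\|\bar u_0-\varphi(0)\|^2_{\Cal H_0}$ must be arranged carefully (this is where the choice $\|\bar u_0^\alpha\|_{\Cal H_\alpha}\to\|\bar u_0\|_{\Cal H_0}$ and the smoothness of $\varphi$ are used). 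Once every individual term is shown to converge in the direction compatible with the inequality, \eqref{2.sol-dis} is inherited by $\bar u$ in the limit, establishing that $\bar u$ is a dissipative solution of \eqref{1.DE}.
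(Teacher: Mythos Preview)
Your proposal is correct and follows essentially the same route as the paper: approximate by Bardina--Euler solutions, use the energy estimate for a uniform $L^\infty(\R_+,\Cal H_0)$ bound, bound $\Dt\bar u_\alpha$ in a negative Sobolev space (the paper uses $H^{-3}$; your claim that $(\bar u,\Nx)\bar u\in H^{-1}$ via $\|\bar u\otimes\bar u\|_{L^1}$ is not quite right in $3D$, but your eventual $H^{-s}$ is fine), extract a weak limit, and pass to the limit in \eqref{2.sol-dis} using lower semicontinuity on the left and strong convergence of $e_\alpha(\varphi)$, $D_\alpha(\varphi)$ on the right. The paper makes the norm-convergence issue you flag at the end disappear by the explicit choice $\bar u_0^\alpha:=(1-\alpha\Dx)^{-1/2}\bar u_0$, which automatically gives $\|\bar u_0^\alpha\|_{\Cal H_\alpha}=\|\bar u_0\|_{\Cal H_0}$.
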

\begin{proof} Let $\alpha>0$ and let $\bar u_0^\alpha\in \Cal H_\alpha$ be  such that
 $$
 \bar u_0^\alpha\to\bar u_0 \text{ in $\Cal H_0$ and } \|\bar u_0^\alpha\|_{\Cal H_\alpha}\to \|\bar u_0\|_{\Cal H_0} \text{ as $\alpha\to0$}.
 $$
In particular, one can take $\bar u_0^\alpha:=(1-\alpha\Dx)^{-1/2}\bar u_0$. Let
$\bar u_\alpha(t):=S_\alpha(t)\bar u_0^\alpha$ be the corresponding solutions
of damped Bardina--Euler equations \eqref{1.DBE}. Then, according to
Theorem~\eqref{Th1.ex} the functions $\bar u_\alpha$ are uniformly
bounded in $C_b(\R,\Cal H_0)$.
 \ Moreover, it is
 not difficult to see that
$$
\|(\bar u_\alpha,\Nx)\bar u_\alpha)\|_{H^{-3}(\Bbb T)}\le C\|\bar u_\alpha\|_{\Cal H_0}^2
$$
and, therefore, $\Dt\bar u_\alpha$ are uniformly bounded in
$C_{loc}(\R_+, H^{-3}(\Bbb T))$. Thus, by Banach--Alaoglu theorem,
we may assume without loss of generality that
$$
\bar u_\alpha\rightharpoondown \tilde u,\ \ \Dt \bar u_\alpha\rightharpoondown \Dt\tilde u\ \ \text{as $\alpha\to0$}
$$
weakly star in $L^\infty_{loc}(\R_+,\Cal H_0)$ and $L^\infty_{loc}(\R_+,H^{-3})$
to some function $\tilde u$. In particular, this convergence implies that
$\tilde u\in C_{w,loc}(\R_+,\Cal H_0)$, $\Dt \tilde u\in L^\infty_{loc}(\R_+,H^{-3})$
and that for every $t\in\R_+$ we have a weak convergence
$\bar u_\alpha(t)\rightharpoondown\tilde u(t)$ in $\Cal H_0$.
\par
It only remains to check that $\tilde u$ is a desired
dissipative solution of \eqref{1.DE}. Let $\varphi$
be a smooth divergent free test function. Then, obviously,
$e_\alpha(\varphi)\to e_0(\varphi)$ strongly in $L^1_{loc}$ and
also $D_\alpha(\varphi)\to D_0(\varphi)$ strongly in $L^1_{loc}(\R_+,L^2)$.
 Writing down the variational inequality \eqref{2.sol-dis} for the dissipative solutions $\bar u_\alpha$ and passing to the limit $\alpha\to0$, we now see that $\tilde u$ satisfies the variational inequality for $\alpha=0$ which finishes the proof of the proposition.
\end{proof}

\section{The  trajectory dynamical system}\label{S3}
In this section, we develop the trajectory approach for dissipative solutions of
limit damped Euler equations \eqref{1.DE}. Following the general approach, we
want to fix the trajectory phase space $\Cal K_+$ as a set of all dissipative
solutions of \eqref{1.DE} which correspond to $\bar u_0\in\Cal H_0$. However,
there is one more problem here, namely, the class of dissipative solutions
is {\it not invariant} with respect to time shifts $T(t)$ since the variational inequality \eqref{2.sol-dis} is not invariant. As a result, the $\omega$-limit set of a subset of $\Cal K_0^+$ will not belong to $\Cal K_0^+$ in general and we will be unable to characterise the constructed attractor in terms of dissipative solutions of the considered problem. In order to overcome this difficulty, we restrict further (following e.g. \cite{Zel}, see also \cite{CVZ,MirZel})
the class of dissipative solutions by considering only those of them which can
be obtained as weak limits of the corresponding solutions of damped Bardina--Euler
equations \eqref{1.DBE} as $\alpha\to0$. Namely,
\begin{definition} The trajectory phase space
\begin{equation}\label{3.top}
\Cal K_0^+\subset L^\infty_{loc}(\R_+,\Cal H_0)\cap W^{1,\infty}_{loc}(\R_+,H^{-3})
\end{equation}
is defined as follows: $\tilde u(t)\in\Cal K_0^+$ if and only if there
exists a sequence $\alpha_n\to0$, $\alpha_n>~0$ and a sequence of the initial data $\bar u_0^{\alpha_n}$ such that $\|\bar u_0^{\alpha_n}\|_{\Cal H_{\alpha_n}}$ is uniformly bounded in $\Cal H_{\alpha_n}$ and the corresponding solutions
$\bar u_n(t):=S_{\alpha_n}(t)\bar u_0^{\alpha_n}$ of equations \eqref{1.DBE} converge {\it weakly} in $\Cal H_0$ to $\tilde u(t)$ for every $t\ge0$. Then, obviously, the translation semigroup $T(h)$ acts on $\Cal K_0^+$. We endow the trajectory phase space $\Cal K_0^+$ with the {\it weak star topology} induced by embedding \eqref{3.top} and will refer to $(T(h),\Cal K_0^+)$ as the trajectory dynamical system associated with damped Euler equation \eqref{1.DE}.
\end{definition}
\begin{remark} Arguing as in the proof of Theorem \ref{Th1.ex}, we see that the set $\Cal K_0^+$ is not empty and possesses a sufficiently large number of trajectories, namely, for any $\bar u_0\in\Cal H_0$ there exists at least one element $\tilde u\in \Cal K_0^+$ such that $\tilde u(0)=\bar u_0$ (we may take a dissipative solution constructed in Theorem \ref{Th1.ex} as such a $\tilde u$).
\par
However, in contrast to Theorem \ref{Th1.ex}, we now unable to pass to the limit $\alpha_n\to0$ and get that any $\tilde u\in\Cal K_0^+$ is a dissipative solution. The problem is that we now require only the weak convergence $\bar u_0^{\alpha_n}\to\bar u_0$ and cannot require the convergence of the norms ($\|\bar u_0^{\alpha_n}\|_{\Cal H_{\alpha_n}}\to\|\bar u_0\|_{\Cal H_0}$) since this norm convergence cannot be verified for $t>0$ and would make $\Cal K_0^+$ not translation-invariant. For this reason, the concept of a dissipative solution should be modified. We will do this modification using the so-called $M$-functional introduced in \cite{Zel} for study the supercritical damped wave equations, see also \cite{CVZ}.
\end{remark}
\begin{definition} Let $\tilde u\in\Cal K_0^+$ and let $\varphi$ be a smooth divergent free test function. Define
\begin{equation}\label{3.M}
M_{\tilde u,\varphi}(t):=\inf_{\bar u_{\alpha_n}\rightharpoondown \tilde u}\liminf_{n\to\infty}\|\bar u_{\alpha_n}(t)-\varphi(t)\|^2_{\Cal H_{\alpha_n}},
\end{equation}
where the external infimum is taken over all solutions $\bar u_{\alpha_n}$ of
damped Bardina--Euler equations \eqref{1.DBE} such that $\alpha_n\to0$ as $n\to\infty$, $\alpha_n>0$, the sequence $\bar u_{\alpha_n}(0)$ is uniformly bounded in $\Cal H_{\alpha_n}$ and such that
\begin{equation}
\bar u_{\alpha_n}(t)\rightharpoondown\tilde u(t)
\end{equation}
weakly in $\Cal H_0$ for every $t\ge0$.
\end{definition}
The next proposition collects some straightforward properties of the introduced $M$-functional.
\begin{proposition} Let $\tilde u\in\Cal K_0^+$ and $\varphi$ be a smooth divergent free function. Then,
\par
1) $\|\tilde u(t)-\varphi(t)\|^2_{\Cal H_0}\le M_{\tilde u,\varphi}(t)$ for all $t\ge0$;
\par
2) $M_{\tilde u,\varphi}(t)=M_{\tilde u,0}(t)+\|\varphi\|^2_{\Cal H_0}-2(\tilde u,\varphi)$ for all $t\ge0$;
\par
3) $M_{T(h)\tilde u,T(h)\varphi}(t)\le M_{\tilde u,\varphi}(t+h)$ for all $t,h\ge0$;
\par
4) The following analogue of variational inequality \eqref{2.sol-dis} holds:
\begin{multline}\label{3.dis-mod}
M_{\tilde u,\varphi}(t+\kappa)\le M_{\tilde u,\varphi}(t)e^{-2\int_t^{t+\kappa}(\gamma-e_0(\varphi)(s))\,ds}-\\-
2\int_t^{t+\kappa}e^{-2\int_\tau^{t+\kappa}(\gamma-e_0(\varphi)(s))\,ds}(D_0(\varphi)(\tau),\tilde u(\tau)-\varphi(\tau))\,d\tau
\end{multline}
for all $t,\kappa\ge0$.
\end{proposition}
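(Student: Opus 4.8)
The plan is to deduce all four assertions by passing to the limit $n\to\infty$ in the corresponding statements for the Bardina approximations $\bar u_{\alpha_n}$, exploiting the elementary inequality $\|v\|^2_{\Cal H_{\alpha_n}}=\|v\|^2_{L^2}+\alpha_n\|\Nx v\|^2_{L^2}\ge\|v\|^2_{\Cal H_0}$ together with the weak lower semicontinuity of the $L^2$-norm. For (1), I fix any admissible sequence $\bar u_{\alpha_n}$ for $\tilde u$. Since $\bar u_{\alpha_n}(t)-\varphi(t)\rightharpoondown\tilde u(t)-\varphi(t)$ weakly in $\Cal H_0$ and $\|\bar u_{\alpha_n}(t)-\varphi(t)\|_{\Cal H_{\alpha_n}}\ge\|\bar u_{\alpha_n}(t)-\varphi(t)\|_{\Cal H_0}$, lower semicontinuity gives $\liminf_n\|\bar u_{\alpha_n}(t)-\varphi(t)\|^2_{\Cal H_{\alpha_n}}\ge\|\tilde u(t)-\varphi(t)\|^2_{\Cal H_0}$, and taking the infimum over all admissible sequences yields (1).

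For (2), I expand $\|\bar u_{\alpha_n}(t)-\varphi(t)\|^2_{\Cal H_{\alpha_n}}=\|\bar u_{\alpha_n}(t)\|^2_{\Cal H_{\alpha_n}}-2(\bar u_{\alpha_n}(t),\varphi(t))_{\Cal H_{\alpha_n}}+\|\varphi(t)\|^2_{\Cal H_{\alpha_n}}$ and check that the last two terms converge. Since $\varphi$ is smooth, $\|\varphi(t)\|^2_{\Cal H_{\alpha_n}}\to\|\varphi(t)\|^2_{\Cal H_0}$; the $L^2$-part of the cross term tends to $(\tilde u(t),\varphi(t))$ by weak convergence, while the gradient part $\alpha_n(\Nx\bar u_{\alpha_n}(t),\Nx\varphi(t))$ is $O(\sqrt{\alpha_n})$ because $\alpha_n\|\Nx\bar u_{\alpha_n}(t)\|^2_{L^2}\le\|\bar u_{\alpha_n}(t)\|^2_{\Cal H_{\alpha_n}}$ is uniformly bounded by the dissipative estimate \eqref{1.dis}. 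Hence the $\liminf_n$ of the full quantity equals $\liminf_n\|\bar u_{\alpha_n}(t)\|^2_{\Cal H_{\alpha_n}}+\|\varphi(t)\|^2_{\Cal H_0}-2(\tilde u(t),\varphi(t))$; since the last two terms are sequence-independent, taking the infimum over admissible sequences gives $M_{\tilde u,0}(t)$ from the first term and produces (2).

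Assertion (3) uses the autonomy of \eqref{1.DBE}: if $\bar u_{\alpha_n}$ is admissible for $\tilde u$, then $T(h)\bar u_{\alpha_n}$ again solves \eqref{1.DBE}, converges weakly to $T(h)\tilde u$ for every $t\ge0$, and has initial data $\bar u_{\alpha_n}(h)$ bounded in $\Cal H_{\alpha_n}$ by \eqref{1.dis}, so it is admissible for $T(h)\tilde u$. Evaluating the functional $M_{T(h)\tilde u,T(h)\varphi}(t)$ on this particular sequence gives $M_{T(h)\tilde u,T(h)\varphi}(t)\le\liminf_n\|\bar u_{\alpha_n}(t+h)-\varphi(t+h)\|^2_{\Cal H_{\alpha_n}}$, and taking the infimum over all sequences admissible for $\tilde u$ yields (3). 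The inequality (rather than equality) reflects that sequences admissible for $T(h)\tilde u$ need not be time-shifts of sequences admissible for $\tilde u$.

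Assertion (4) is the substantive one. Since the derivation of \eqref{2.sol-dis} integrates a pointwise-in-time differential inequality, by autonomy it holds on $[t,t+\kappa]$ for each $\bar u_{\alpha_n}$, with the roles of $0,t$ played by $t,t+\kappa$. I would fix $\eb>0$, choose a sequence admissible for $\tilde u$ that is $\eb$-optimal for $M_{\tilde u,\varphi}(t)$ (passing to a subsequence so that $\|\bar u_{\alpha_n}(t)-\varphi(t)\|^2_{\Cal H_{\alpha_n}}$ converges, with limit $\le M_{\tilde u,\varphi}(t)+\eb$), and take $\liminf_n$ in this variational inequality. The left-hand side controls $M_{\tilde u,\varphi}(t+\kappa)$ from above by the very definition of $M$. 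On the right, the exponential weights converge because $e_{\alpha_n}(\varphi)\to e_0(\varphi)$ in $L^1_{loc}$ (hence the integrals are uniformly bounded and the factors are bounded and convergent), and the coupling integral $\int_t^{t+\kappa}e^{-2\int_\tau^{t+\kappa}(\gamma-e_{\alpha_n}(\varphi))\,ds}(D_{\alpha_n}(\varphi)(\tau),\bar u_{\alpha_n}(\tau)-\varphi(\tau))\,d\tau$ converges to its $\alpha=0$ analogue because $D_{\alpha_n}(\varphi)\to D_0(\varphi)$ strongly in $L^1_{loc}(\R_+,L^2)$ while $\bar u_{\alpha_n}(\tau)-\varphi(\tau)\rightharpoondown\tilde u(\tau)-\varphi(\tau)$ weakly in $L^2$ and is uniformly bounded. \emph{This weak--strong pairing of the coupling term is the main obstacle}: it requires combining the a.e.\ weak--strong convergence of $(D_{\alpha_n}(\varphi)(\tau),\bar u_{\alpha_n}(\tau)-\varphi(\tau))$ with an $L^1$-domination in $\tau$ (extracted, along a further subsequence, from the $L^1(\R_+,L^2)$-convergence of $D_{\alpha_n}(\varphi)$) so as to invoke dominated convergence. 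Collecting the limits, using that $\liminf_n(a_nb_n)=b\liminf_n a_n$ when $b_n\to b>0$ for the first right-hand term, and finally letting $\eb\to0$, yields exactly \eqref{3.dis-mod}.
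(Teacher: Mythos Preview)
Your proposal is correct and follows essentially the same approach as the paper's own proof: the paper dismisses assertions (1)--(3) as ``obvious'' and for (4) writes down the shifted variational inequality \eqref{2.sol-dis} on $[t,t+\kappa]$ for each approximation $\bar u_{\alpha_n}$, then takes $\liminf_{n\to\infty}$ followed by $\inf$ over admissible sequences. Your $\eb$-optimal-sequence formulation is an equivalent way of organizing the same limit, and your detailed treatment of (1)--(3) simply spells out what the paper leaves implicit.

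One small remark on your handling of the coupling term in (4): you flag it as ``the main obstacle'' and speak of extracting a subsequence for $L^1$-domination, but in fact no subsequence is needed. Since $\varphi$ is smooth, $D_0(\varphi)\in C([t,t+\kappa],L^2)$, so the pointwise integrand $(D_0(\varphi)(\tau),\bar u_{\alpha_n}(\tau)-\varphi(\tau))$ is dominated by a fixed constant on the compact interval; together with the pointwise weak convergence $\bar u_{\alpha_n}(\tau)\rightharpoondown\tilde u(\tau)$, dominated convergence applies directly. This is why the paper calls the passage to the limit in that term ``immediate''.
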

\begin{proof} Indeed, the first three statements are obvious, so we only
need to check the fourth one. Let $\bar u_{\alpha_n}(t)$ be a sequence
of solutions of Bardina--Euler approximations \eqref{1.DBE} which
converges weakly to $\tilde u(t)$ as $\alpha_n\to0$ and let $\varphi$
be a smooth divergence free test function. Then,
analogously to \eqref{2.sol-dis}, we have the inequality
\begin{multline}\label{3.lim}
\|\bar u_{\alpha_n}(t+\kappa)-\varphi(t+\kappa)\|_{\Cal H_{\alpha_n}}^2\le \|\bar u_{\alpha_n}(t)-\varphi(t)\|_{\Cal H_{\alpha_n}}^2e^{-2\int_t^{t+\kappa}(\gamma-e_{\alpha_n}(\varphi)(s))\,ds}-\\-
2\int_t^{t+\kappa}e^{-2\int_\tau^{t+\kappa}(\gamma-e_{\alpha_n}(\varphi)(s))\,ds}(D_{\alpha_n}(\varphi)(\tau),\bar u_{\alpha_n}(\tau)-\varphi(\tau))\,d\tau.
\end{multline}
Thus, we only need to pass to the limit in \eqref{3.lim} in a proper way.
The passage to the limit in the last term in the RHS
is immediate since we have a weak convergence of
$\bar u_{\alpha_n}(t)$ to $\tilde u(t)$ for all $t$.
Taking $\liminf_{\alpha_n\to0}$ followed by
$\inf_{\bar u_{\alpha_n}\to\tilde u}$ from both
sides of \eqref{3.lim}, we get \eqref{3.dis-mod}
and finish the proof of the proposition.
\end{proof}
The next proposition is crucial for our construction of trajectory attractors.
\begin{proposition}\label{Prop3.main} Let $\{\tilde u^l\}_{l\in\Bbb N}\subset\Cal K^+_0$ be such that the sequence $M_{\tilde u^l,0}(0)$ is uniformly bounded and
\begin{equation}
\tilde u^l\rightharpoondown \tilde u \ \ \text{ weakly star in }\ \ L^\infty_{loc}(\R_+,\Cal H_0).
\end{equation}
Then $\tilde u\in\Cal K_0^+$ and
\begin{equation}\label{3.w-cont}
M_{\tilde u,0}(t)\le\liminf_{l\to\infty}M_{\tilde u^l,0}(t)
\end{equation}
for all $t\ge0$.
\end{proposition}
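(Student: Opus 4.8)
My plan is to separate the statement into a closedness-plus-lower-semicontinuity claim at $t=0$, proved by a diagonal extraction, and the general-$t$ case, which I would reduce to $t=0$ using the fact that for $\varphi=0$ the energy relation is an \emph{exact} identity. Fix a uniform bound $M_{\tilde u^l,0}(0)\le C$. For each $l$ I choose an admissible approximating sequence $\{\bar u^l_{\alpha_n}\}_n$ for $\tilde u^l$ (so $\alpha_n\to0$, $\bar u^l_{\alpha_n}(s)\rightharpoondown\tilde u^l(s)$ in $\Cal H_0$ for all $s$, and initial data bounded in $\Cal H_{\alpha_n}$) that almost realizes the infimum defining $M_{\tilde u^l,0}(0)$, i.e. $\liminf_n\|\bar u^l_{\alpha_n}(0)\|^2_{\Cal H_{\alpha_n}}\le M_{\tilde u^l,0}(0)+1/l$. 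Passing to the subsequence on which this $\liminf$ is attained and discarding finitely many terms, I may in fact assume $\sup_n\|\bar u^l_{\alpha_n}(0)\|^2_{\Cal H_{\alpha_n}}\le C+1$ uniformly in $l$. The dissipative estimate \eqref{1.dis} then bounds $\|\bar u^l_{\alpha_n}(s)\|_{\Cal H_{\alpha_n}}$ uniformly in $s,n,l$, and, using $\|(\bar u^l_{\alpha_n},\Nx)\bar u^l_{\alpha_n}\|_{H^{-3}}\le C\|\bar u^l_{\alpha_n}\|^2_{\Cal H_0}$ together with \eqref{1.DBE}, bounds $\Dt\bar u^l_{\alpha_n}$ uniformly in $L^\infty(\R_+,H^{-3})$.

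Next I would extract a diagonal sequence. On bounded sets the weak-star topology of $\Theta_+$ is metrizable; the uniform $H^{-3}$-Lipschitz bound gives equicontinuity and, together with the compact embedding $\Cal H_0\hookrightarrow H^{-3}$, an Arzel\`a--Ascoli argument promotes weak-star $L^\infty(\R_+,\Cal H_0)$-convergence to convergence in $C_{loc}(\R_+,H^{-3})$, hence to weak $\Cal H_0$-convergence at every fixed $t$. Combining $\tilde u^l\rightharpoondown\tilde u$ with $\bar u^l_{\alpha_n}\rightharpoondown\tilde u^l$ (as $n\to\infty$), a diagonal choice $w_l:=\bar u^l_{\alpha_{n(l)}}$, $\beta_l:=\alpha_{n(l)}\to0$, can be arranged so that $w_l(t)\rightharpoondown\tilde u(t)$ in $\Cal H_0$ for every $t\ge0$ while $\|w_l(0)\|^2_{\Cal H_{\beta_l}}\le C+1$. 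By the definition of $\Cal K_0^+$ this already gives $\tilde u\in\Cal K_0^+$, and since $\{w_l\}$ is itself an admissible sequence for $\tilde u$,
\[
M_{\tilde u,0}(0)\le\liminf_{l\to\infty}\|w_l(0)\|^2_{\Cal H_{\beta_l}}\le\liminf_{l\to\infty}M_{\tilde u^l,0}(0),
\]
which is \eqref{3.w-cont} at $t=0$.

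For $t>0$ the crucial point is that testing \eqref{1.DBE} with $\bar u$ annihilates the nonlinearity, giving the exact balance $\frac12\frac{d}{dt}\|\bar u\|^2_{\Cal H_\alpha}+\gamma\|\bar u\|^2_{\Cal H_\alpha}=(g,\bar u)$, i.e.
\[
\|\bar u_{\alpha_n}(t)\|^2_{\Cal H_{\alpha_n}}=e^{-2\gamma t}\|\bar u_{\alpha_n}(0)\|^2_{\Cal H_{\alpha_n}}+2\int_0^te^{-2\gamma(t-s)}(g,\bar u_{\alpha_n}(s))\,ds.
\]
Since $\bar u_{\alpha_n}(s)\rightharpoondown\tilde u(s)$ weakly in $\Cal H_0$ and the integrand is dominated, the last integral converges to $G_{\tilde u}(t):=2\int_0^te^{-2\gamma(t-s)}(g,\tilde u(s))\,ds$, a quantity depending only on the weak limit. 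Hence for every admissible sequence $\liminf_n\|\bar u_{\alpha_n}(t)\|^2_{\Cal H_{\alpha_n}}=e^{-2\gamma t}\liminf_n\|\bar u_{\alpha_n}(0)\|^2_{\Cal H_{\alpha_n}}+G_{\tilde u}(t)$, and taking the infimum over such sequences (the additive term being sequence-independent) yields the identity
\[
M_{\tilde u,0}(t)=e^{-2\gamma t}M_{\tilde u,0}(0)+G_{\tilde u}(t),
\]
valid for $\tilde u$ and for each $\tilde u^l$. As $\tilde u^l\rightharpoondown\tilde u$ forces $G_{\tilde u^l}(t)\to G_{\tilde u}(t)$, combining this identity with the $t=0$ inequality gives $M_{\tilde u,0}(t)\le\liminf_{l\to\infty}M_{\tilde u^l,0}(t)$ for all $t\ge0$.

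The main obstacle is the diagonal extraction: membership in $\Cal K_0^+$ requires weak convergence at \emph{every} $t$, which is strictly stronger than the weak-star $L^\infty$-convergence available for free, so the Arzel\`a--Ascoli upgrade (and the uniform $H^{-3}$-bound on $\Dt\bar u^l_{\alpha_n}$ underlying it) must be carried out carefully, with the indices $n(l)$ chosen simultaneously small in $\alpha$, close to $\tilde u$, and with controlled initial norm. One might also fear that the general-$t$ inequality needs approximating sequences that are at once optimal at time $t$ and of bounded initial energy; the energy identity of the third step is precisely what dissolves this difficulty, since it shows that optimality at time $t$ coincides with optimality at $t=0$.
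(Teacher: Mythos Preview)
Your argument for the closedness $\tilde u\in\Cal K_0^+$ is essentially the paper's: both run a diagonal extraction through the metrizability of bounded sets in the weak-star topology of $\Theta_+$, with you spelling out the Arzel\`a--Ascoli upgrade to pointwise-in-$t$ weak $\Cal H_0$-convergence that the paper leaves implicit.

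Where you diverge is in the treatment of \eqref{3.w-cont}. The paper simply declares it ``straightforward'' and omits the proof; you supply one, and your route is genuinely informative. Your key observation---that with $\varphi=0$ the energy relation for \eqref{1.DBE} is an \emph{exact identity}, so that $\|\bar u_{\alpha_n}(t)\|^2_{\Cal H_{\alpha_n}}=e^{-2\gamma t}\|\bar u_{\alpha_n}(0)\|^2_{\Cal H_{\alpha_n}}+2\int_0^te^{-2\gamma(t-s)}(g,\bar u_{\alpha_n}(s))\,ds$ with a forcing term converging (by weak convergence of $\bar u_{\alpha_n}$) to a quantity $G_{\tilde u}(t)$ depending only on the weak limit---is correct and yields the explicit formula $M_{\tilde u,0}(t)=e^{-2\gamma t}M_{\tilde u,0}(0)+G_{\tilde u}(t)$, stronger than what the proposition asserts. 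This buys you the clean reduction of \eqref{3.w-cont} at every $t$ to the single case $t=0$, and it dissolves the worry (which you correctly flag) that an approximating sequence near-optimal at time $t$ might fail to have bounded initial energy: optimality at $t$ and at $0$ coincide. The alternative that a reader of the paper would likely produce---rerunning the diagonal argument at each fixed $t$ with sequences near-optimal there---also works but gives less. The three simultaneous constraints on $n(l)$ in your diagonal step are real but, as you note, routine once the approximating sequence for $\tilde u^l$ has been thinned to one along which the $\liminf$ is a limit.
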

\begin{proof} Taking $\varphi\equiv0$ in \eqref{3.dis-mod} and using the assumed uniform boundedness of $M_{\tilde u^l,0}(0)$, we get that
\begin{equation}
\|\tilde u^l(t)\|_{\Cal H_0}^2\le M_{\tilde u^l,0}(t)\le C
\end{equation}
for all $t\ge0$ and $l\in\Bbb N$.  Moreover, by the definition of the
$M$-functional, for any $l\in\Bbb N$, there exist a sequence
$\alpha_{n,l}>0$, $\lim_{n\to\infty}\alpha_{n,l}=0$ and a sequence
of initial data $\bar u_0^{\alpha_{n,l}}\in\Cal H_{\alpha_{n,l}}$
such that  the corresponding solutions
$\bar u_{\alpha_{n,l}}(t):=S(t)\bar u_0^{\alpha_{n,l}}$ of damped Bardina--Euler
equations \eqref{1.DBE} converge weakly to $\tilde u^l(t)$ in $\Cal H_0$
for any $t\ge0$. Moreover, since $M_{\tilde u^l,0}(0)$ are uniformly bounded,
we may assume without loss of generality that
\begin{equation}
\|\bar u_0^{\alpha_{n,l}}\|_{\Cal H_{\alpha_{n,l}}}\le C,
\end{equation}
where $C$ is independent of $n$ and $l$. Arguing now as in Proposition \ref{Prop2.sol}, we see that
\begin{equation}
\|\bar u_{\alpha_{n,l}}(t)\|_{\Cal H_{\alpha_{n,l}}}+\|\Dt \bar u_{\alpha_{n,l}}(t)\|_{H^{-3}}\le C,\ \ t\ge0.
\end{equation}
In other words, the sequence $\bar u_{\alpha_{n,l}}$ is uniformly bounded
in the space $L^\infty(\R_+,\Cal H_0)\cap W^{1,\infty}(\R_+,H^{-3})$ and the
Banach--Alaoglu theorem now guarantees that
$$
\tilde u\in L^\infty(\R_+,\Cal H_0)\cap W^{1,\infty}(\R_+,H^{-3}).
$$
To verify that $\tilde u\in\Cal K_0^+$, we only need to construct a
sequence of solutions of the approximating damped Bardina--Euler
equations which is convergent to $\tilde u$. To this end, we note that
any bounded set in $L^\infty_{loc}(\R_+,\Cal H_0)\cap W^{1,\infty}_{loc}(\R_+,H^{-3})$
is metrizable in weak-star topology. Let $B$ be a bounded set of this space which
contains all trajectories $\bar u_{\alpha_{n,l}}$ and $\tilde u$ and
let $d(\cdot,\cdot)$ be such a metric.
Then, we have the convergence $\bar u_{\alpha_{n,l}}\to\tilde u^l$ as $n\to\infty$
in a metric space $(B,d)$ for every $l\in\Bbb N$ as well as the
convergence $\tilde u^l\to \tilde u$ as $l\to\infty$. Thus,
there exists a sequence $\bar u_{\alpha_{n_l,l}}\to\tilde u$
in $(B,d)$ and the approximating sequence is constructed.
This proves the inclusion $\tilde u\in\Cal K_0^+$.
\par
The proof of the weak lower semicontinuity \eqref{3.w-cont} is straightforward and we left it to the reader.
\end{proof}
\begin{remark} Note that the properties
of "solutions" $\tilde u\in\Cal K_0^+$
are more delicate than the analogous properties of
dissipative solutions discussed in Remark \ref{Rem2.good}.
Indeed, we are still able to prove that any sufficiently regular
distributional solution is automatically in $\Cal K_0^+$. However,
by  definition, $\Cal K_0^+$ contains only those solutions which can
be obtained as a limit of regularized solutions of damped Bardina--Euler
equation and it is not clear whether or not any dissipative solutions can be
obtained in such a way. Moreover, keeping in mind the oscillatory shear flow
solutions of Euler equations constructed by Di Perna and Majda, see \cite{DiM}
as well as the result of Proposition \ref{Prop3.main}, one may expect that there
are {\it regular} trajectories $\tilde u\in\Cal K_0^+$ which do not satisfy the
Euler equation \eqref{1.DE} in the sense of distributions. Actually, the main
thing which we know is that $\Cal K_0^+$ is not empty and, for every
$\tilde u_0\in\Cal H_0$ there is a dissipative solution $\tilde u\in\Cal K_0^+$
such that $\tilde u(0)=\tilde u_0$, Note that this situation is somehow standard
for the theory of trajectory attractors, see \cite{Ch-V-book} and  the  references therein.
\end{remark}

\section{The trajectory attractor}\label{S4}
 In this section, we construct a weak global attractor of the trajectory dynamical system $(T(h),\Cal K_0^+)$ associated with the damped Euler equation \eqref{1.DE}. Recall that we have already introduced the topology on the trajectory phase space $\Cal K_0^+$ as the topology induced by the embedding
 $$
 \Cal K_0^+\subset \Theta_+:=L^\infty_{w^*,loc}(\R_+,\Cal H_0)\cap W^{1,\infty}_{w^*,loc}(\R_+,H^{-3}).
 $$
However, this topology is not convenient for defining the bornology (= the class of bounded sets) which is necessary for the attractor theory. Instead, following \cite{Zel} (see also \cite{CVZ,MirZel}) we give the following definitions.
\begin{definition}\label{Def4.attr} A set $B\subset\Cal K_0^+$ is bounded in $\Cal K_0^+$ if
$$
\sup_{\tilde u\in B}M_{\tilde u,0}(0)\le C_B<\infty.
$$
A set $\Cal A_{tr}(0)\subset\Cal K_0^+$ is a weak global attractor of the trajectory dynamical system $(T(h),\Cal K_0^+)$ (= a (weak) trajectory attractor of equation \eqref{1.DE}) if
\par
1. $\Cal A_{tr}(0)$ is a compact set in $\Cal K_0^+$;
\par
2. $\Cal A_{tr}$ is strictly invariant: $T(h)\Cal A_{tr}(0)=\Cal A_{tr}(0)$ for all $h\ge0$;
\par
3. $\Cal A_{tr}(0)$ attracts the images of all bounded sets of $\Cal K_0^+$, i.e., for every bounded set $B\subset \Cal K_0^+$ and every neighbourhood $\Cal O(\Cal A_{tr}(0))$ of $\Cal K_0^+$ in $\Theta_+$, there exists $T=T(B,\Cal O)$ such that
$$
T(t)B\subset\Cal O(\Cal A_{tr}(0) \text{ for all } t\ge T.
$$
\end{definition}
The following theorem can be considered as the main result of the paper.

\begin{theorem} Let $g\in \Cal H_0$. Then the damped Euler equation \eqref{1.DE} possesses a weak trajectory attractor $\Cal A_{tr}(0)$ in the sense of Definition \ref{Def4.attr}. Moreover, this attractor is generated by complete bounded trajectories of this equation:
\begin{equation}\label{4.rep}
\Cal A_{tr}(0)=\Cal K_0\big|_{t\ge0},
\end{equation}
where $\tilde u\in\Cal K_0\subset L^\infty(\R,\Cal H_0)\cap W^{1,\infty}(\R,H^{-3})$
if and only if there exist sequences $t_n\to-\infty$, $\alpha_n\to0$ and
$\bar u_0^{\alpha_n}\in\Cal H_{\alpha_n}$ such that the norms
$\|\bar u_0^{\alpha_n}\|_{\Cal H_{\alpha_n}}$ are uniformly bounded as
$n\to\infty$ and the corresponding solutions
$u_{\alpha_n}(t):=S(t+t_n)\bar u_0^{\alpha_n}$, $t\ge-t_n$ of the corresponding
damped Bardina--Euler equations \eqref{1.DBE} converge weakly to the
function $\tilde u$:
$$
\bar u_{\alpha_n}(t)\rightharpoondown \tilde u(t)\ \ \text{in $\Cal H_0$ for all }\ t\in \R.
$$
\end{theorem}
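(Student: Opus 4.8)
The plan is to establish the existence of the trajectory attractor using the abstract theory of attractors for semigroups acting on a space with a bornology, which requires verifying three ingredients: (i) the semigroup $T(h)$ possesses a bounded absorbing set, (ii) it has some compactness/asymptotic compactness property compatible with the weak-star topology of $\Theta_+$, and (iii) the trajectory phase space $\Cal K_0^+$ together with the bornology defined via the $M$-functional is closed under the relevant limits. The representation formula \eqref{4.rep} will then follow by the standard identification of the attractor with the set of bounded complete trajectories.

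\medskip

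First I would establish dissipativity. Taking $\varphi\equiv0$ in the modified variational inequality \eqref{3.dis-mod} gives, for every $\tilde u\in\Cal K_0^+$,
\begin{equation}
M_{\tilde u,0}(t+\kappa)\le M_{\tilde u,0}(t)e^{-2\gamma\kappa},
\end{equation}
since $e_0(0)\equiv0$ and $D_0(0)=-\Pi g$ contributes the inhomogeneous term; carrying the forcing term through yields a uniform estimate of the form $M_{\tilde u,0}(t)\le M_{\tilde u,0}(0)e^{-2\gamma t}+\gamma^{-2}\|g\|^2_{\Cal H_0}$, which is the limit analogue of \eqref{1.dis}. Combined with property 1 of the $M$-functional, this shows that the set $\Cal B:=\{\tilde u\in\Cal K_0^+:M_{\tilde u,0}(0)\le 2\gamma^{-2}\|g\|^2_{\Cal H_0}\}$ is a bounded absorbing set in the sense of Definition \ref{Def4.attr}, using property 3 ($M_{T(h)\tilde u,0}(t)\le M_{\tilde u,0}(t+h)$) to propagate the bound under time shifts.

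\medskip

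Next I would verify the asymptotic compactness, which is exactly where Proposition \ref{Prop3.main} does the decisive work. Given a bounded sequence $T(t_n)\tilde u^n$ with $t_n\to\infty$ and $\tilde u^n$ in a bounded set, the uniform bound on $M_{T(t_n)\tilde u^n,0}(0)$ (from the dissipative estimate above) together with the Banach--Alaoglu theorem yields a weak-star convergent subsequence in $\Theta_+$; Proposition \ref{Prop3.main} guarantees that the limit lies in $\Cal K_0^+$ and that the $M$-functional is weakly lower semicontinuous, so the limit is again bounded. This closedness of $\Cal K_0^+$ under weak-star limits of bounded sequences is the substitute for the compactness of trajectory spaces used in the classical theory, and it is the property that makes the construction work despite the lack of translation invariance of the dissipative-solution class. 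Having an absorbing set and this asymptotic compactness, I would define $\Cal A_{tr}(0)$ as the $\omega$-limit set of the absorbing set $\Cal B$,
\begin{equation}
\Cal A_{tr}(0):=\bigcap_{s\ge0}\overline{\bigcup_{t\ge s}T(t)\Cal B}^{\,\Theta_+},
\end{equation}
and then verify compactness, strict invariance, and attraction by the standard $\omega$-limit-set arguments, where strict invariance $T(h)\Cal A_{tr}(0)=\Cal A_{tr}(0)$ again relies on the closedness from Proposition \ref{Prop3.main}.

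\medskip

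Finally, for the representation \eqref{4.rep} I would show that $\Cal A_{tr}(0)$ coincides with $\Cal K_0\big|_{t\ge0}$, the restrictions of bounded complete trajectories. The inclusion $\Cal K_0\big|_{t\ge0}\subset\Cal A_{tr}(0)$ is obtained by noting that a bounded complete trajectory stays in the absorbing set for all sufficiently negative times, hence belongs to the $\omega$-limit set. For the reverse inclusion, I would take $\tilde u\in\Cal A_{tr}(0)$, use strict invariance to pull back by all $T(h)$, and construct a bounded complete trajectory through $\tilde u$ by a diagonal extraction over shifts $t_n\to-\infty$, where the definition of $\Cal K_0$ via approximating sequences $\bar u_{\alpha_n}(t):=S(t+t_n)\bar u_0^{\alpha_n}$ mirrors exactly the construction in Proposition \ref{Prop3.main}. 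The main obstacle throughout is ensuring that all limiting objects remain inside $\Cal K_0^+$ with controlled $M$-functional: the non-invariance of the variational inequality means one cannot simply quote the classical trajectory-attractor existence theorem, and the diagonal argument identifying complete trajectories must be carried out so that the approximating Bardina--Euler solutions converge weakly on all of $\R$ simultaneously, which is precisely the delicate metrizability-and-diagonalization step already isolated in the proof of Proposition \ref{Prop3.main}.
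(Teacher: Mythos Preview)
Your proposal is correct and follows essentially the same route as the paper: both arguments take $\varphi\equiv0$ in \eqref{3.dis-mod} to obtain the dissipative estimate and the absorbing set $\Cal B=\{\tilde u\in\Cal K_0^+: M_{\tilde u,0}(0)\le 2\gamma^{-2}\|g\|^2_{\Cal H_0}\}$, then combine Banach--Alaoglu with Proposition~\ref{Prop3.main} for compactness/closedness of $\Cal B$ in $\Theta_+$, and finally invoke the standard abstract attractor existence theorem together with its representation via complete bounded trajectories. The only cosmetic difference is that the paper phrases the compactness hypothesis as ``$\Cal B$ is a metrizable compact and $T(h)$ is continuous on it,'' whereas you phrase it via asymptotic compactness and the $\omega$-limit construction; these are equivalent standard formulations.
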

\begin{proof} According to the abstract theorem on
 existence of global attractors, see \cite{B-V,Ch-V-book,T} and the
 references therein, we need to verify that
\par
1. There exists a bounded absorbing set $\Cal B\subset \Cal K_0^+$ of the translation semigroup $T(h)$ which is a metrizable compact in $\Cal K_0^+$;
\par
2. The operators $T(h)$ are continuous on $\Cal B$ for every fixed $h\ge0$.
\par
The representation formula \eqref{4.rep} is then a standard corollary of this theorem.
\par
Let us construct an absorbing set $\Cal B$ for the semigroup $T(h)$ with the desired properties. Indeed, taking $\varphi\equiv0$ in \eqref{3.dis-mod}, we have
\begin{multline}
M_{\tilde u,0}(t+\kappa)\le M_{\tilde u,0}(t)e^{-2\gamma\kappa}-2\int_t^{t+\kappa}e^{-2\gamma(t+\kappa-\tau)}(g,\tilde u(\tau))\,d\tau\le \\\le
M_{\tilde u,0}(t)e^{-2\gamma\kappa}+2\int_t^{t+\kappa}e^{-2\gamma(t+\kappa-\tau)}\|g\|_{\Cal H_0}\sqrt{M_{\tilde u,0}(\tau)}\,d\tau
\end{multline}
and the Gronwall inequality gives that
\begin{equation}
M_{\tilde u,0}(t+\kappa)\le M_{\tilde u,0}(t)e^{-2\gamma\kappa}+\frac1{\gamma^2}\|g\|^2_{\Cal H_0}
\end{equation}
and, therefore,
\begin{equation}\label{4,abs}
\Cal B:=\{\tilde u\in\Cal K_0^+\,: \ M_{\tilde u,0}(0)\le \frac2{\gamma^2}\|g\|_{\Cal H_0}^2\}
\end{equation}
is an absorbing set for the semigroup $T(h)$ in $\Cal K_0^+$. By the Banach-Alaoglu theorem, $\Cal B$ is precompact and metrizable in $\Theta_+$ and due to Proposition \ref{Prop3.main} $\Cal B$ is closed in $\Cal K_0^+$. Thus, $\Cal B$ is a metric compact in $\Cal K_0^+$. The continuity of $T(h)$ is obvious since the shift semigroup $T(h)$ is continuous on $\Theta^+$.
\par
Thus, all the assumptions of the abstract attractors existence theorem are verified and the theorem is proved.
\end{proof}
We conclude the section by proving the upper semicontinuity of the attractors $\Cal A_{tr}(\alpha)$ at $\alpha=0$.
\begin{corollary} Let $g\in\Cal H_0$ and let $\Cal A_{tr}(\alpha)$, $\alpha>0$
and $\Cal A_{tr}(0)$ be the trajectory attractors of the damped Bardina--Euler
equations \eqref{1.DBE} and the limit damped Euler equations \eqref{1.DE},
respectively. Then, for every neighbourhood of $\Cal O(\Cal A_{tr}(0))$ of
the limit attractor $\Cal A_{tr}(0)$ in the topology of $\Theta^+$, there
exists $\alpha(\Cal O)>0$ such that
\begin{equation}
\Cal A_{tr}(\alpha)\subset\Cal O(\Cal A_{tr}(0)) \ \ \text{ for all}\ \alpha\le\alpha(\Cal O).
\end{equation}
\end{corollary}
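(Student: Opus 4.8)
The final statement is the upper semicontinuity corollary: for every neighbourhood $\Cal O(\Cal A_{tr}(0))$ of the limit trajectory attractor in $\Theta_+$, there is $\alpha(\Cal O)>0$ such that $\Cal A_{tr}(\alpha)\subset\Cal O(\Cal A_{tr}(0))$ for all $\alpha\le\alpha(\Cal O)$.

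Let me think about how to prove this.

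**The standard upper semicontinuity framework.** Upper semicontinuity of attractors under perturbation is a classical argument (e.g. Hale–Raugel, Babin–Vishik). The standard contradiction approach: suppose upper semicontinuity fails. Then there's a neighbourhood $\Cal O(\Cal A_{tr}(0))$, a sequence $\alpha_n\to 0$, and elements $\bar u_n\in\Cal A_{tr}(\alpha_n)$ with $\bar u_n\notin\Cal O(\Cal A_{tr}(0))$. We want to extract a subsequence converging (in $\Theta_+$) to some limit $\tilde u$, show $\tilde u\in\Cal A_{tr}(0)$, which contradicts $\bar u_n$ staying outside the neighbourhood $\Cal O$ (since $\bar u_n\to\tilde u\in\Cal A_{tr}(0)\subset\Cal O$).

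**Key ingredients needed.**
1. Uniform boundedness: I need $\bar u_n$ (the elements of $\Cal A_{tr}(\alpha_n)$) to be uniformly bounded in the right space so Banach–Alaoglu gives a weakly-* convergent subsequence. Since $\Cal A_{tr}(\alpha) = \Cal K_\alpha|_{t\ge 0}$ consists of complete bounded trajectories, and by the dissipative estimate \eqref{1.dis} every complete bounded trajectory satisfies $\|\bar u(t)\|^2_{\Cal H_\alpha}\le \frac{1}{\gamma^2}\|g\|^2_{L^2}$ (the backward-in-time part kills the initial data term as $t_0\to-\infty$). So all elements of $\Cal A_{tr}(\alpha)$ are uniformly bounded in $L^\infty(\R_+,\Cal H_0)$ by $\frac{1}{\gamma}\|g\|_{L^2}$, uniformly in $\alpha$. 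The $W^{1,\infty}(\R_+,H^{-3})$ bound on $\partial_t$ follows as in Proposition \ref{Prop2.sol}.
2. The limit belongs to $\Cal K_0^+$: this is exactly Proposition \ref{Prop3.main} — a weak-* limit of elements with uniformly bounded $M$-functional lies in $\Cal K_0^+$.
3. The limit is actually a complete trajectory lying in $\Cal A_{tr}(0) = \Cal K_0|_{t\ge 0}$, not just in $\Cal K_0^+$.

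**The crucial point — showing the limit is in $\Cal A_{tr}(0)$.** Each $\bar u_n\in\Cal A_{tr}(\alpha_n)$ extends to a complete bounded trajectory $\hat u_n\in\Cal K_{\alpha_n}$ defined for all $t\in\R$. The uniform bound holds on all of $\R$. I extend the extraction to all of $\R$ (diagonal argument over $t\in\R$, or weak-* convergence in $L^\infty_{loc}(\R,\Cal H_0)$). Then I need to verify the limit $\tilde u$ (on the whole line) fits the characterization \eqref{4.rep} of $\Cal K_0$: there exist $t_n\to-\infty$, $\alpha_n\to 0$, bounded initial data, with solutions converging weakly to $\tilde u$. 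But here $\hat u_n$ is an actual solution of \eqref{1.DBE} on all of $\R$ with parameter $\alpha_n\to 0$, uniformly bounded — this is almost literally the definition of $\Cal K_0$, with the role of $t_n\to-\infty$ being subsumed because $\hat u_n$ is already complete. I'd need to present $\hat u_n$ as $S(t+t_n)\bar u_0^{\alpha_n}$: since $\hat u_n$ is complete and bounded, for each $n$ pick any $t_n\to-\infty$ and set $\bar u_0^{\alpha_n}:=\hat u_n(t_n)$, which is uniformly bounded in $\Cal H_{\alpha_n}$ by the dissipative estimate; then $S(t+t_n)\bar u_0^{\alpha_n}=\hat u_n(t)$. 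This matches \eqref{4.rep} exactly, so $\tilde u\in\Cal K_0$ and hence $\tilde u\in\Cal A_{tr}(0)$.

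**Expected main obstacle.** The delicate step is ensuring the weak-* convergence takes place on the whole line $\R$ (not merely $\R_+$) and that the restriction to $t\ge 0$ is the object controlled by the neighbourhood. The topology on $\Theta_+$ is the weak-* $L^\infty_{w^*,loc}$ topology, which is metrizable on bounded sets, so I can pass to the limit by a diagonal/metric argument over an exhaustion of $\R$. The boundedness of $\bar u_n$ as elements of $\Cal K_0^+$ in the sense of Definition \ref{Def4.attr} (i.e. $\sup M_{\bar u_n,0}(0)<\infty$) must also be checked so that Proposition \ref{Prop3.main} applies; this follows because $M_{\bar u_n,0}(0)$ is bounded by the uniform $\Cal H_{\alpha_n}$-norm of the approximating solutions, which we have controlled by $\frac{2}{\gamma^2}\|g\|^2_{\Cal H_0}$ via the absorbing set \eqref{4,abs}. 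Once these bornological bookkeeping points are handled, the contradiction closes: $\bar u_n\to\tilde u$ in $\Theta_+$ with $\tilde u\in\Cal A_{tr}(0)$, so eventually $\bar u_n\in\Cal O(\Cal A_{tr}(0))$, contradicting the choice of $\bar u_n$.
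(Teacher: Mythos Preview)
Your proof is correct and follows the same route as the paper: uniform bounds on complete trajectories of the regularized systems, Banach--Alaoglu compactness in the weak-star topology on the whole line, and identification of the limit in $\Cal K_0$ via the representation \eqref{4.rep}; the paper simply phrases this directly rather than by contradiction, and appeals to the argument of Proposition~\ref{Prop3.main} where you spell out the limiting construction. One small wobble: Proposition~\ref{Prop3.main} concerns sequences already in $\Cal K_0^+$, so $M_{\bar u_n,0}$ is not literally defined for $\bar u_n\in\Cal A_{tr}(\alpha_n)$ --- but since each $\bar u_n$ is itself a solution of the $\alpha_n$-equation, your direct use of the characterization \eqref{4.rep} of $\Cal K_0$ (with $t_n\to-\infty$ and $\bar u_0^{\alpha_n}:=\hat u_n(t_n)$) already suffices, and the detour through Proposition~\ref{Prop3.main} and the $M$-functional is unnecessary.
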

\begin{proof} The statement of the corollary is almost tautological. Indeed, as we have verified before, the sets $\Cal K_{\alpha}$, $\alpha>0$ of complete trajectories are
 bounded in the space $L^\infty(\R,\Cal H_\alpha)\cap W^{1,\infty}(\R,H^{-3})$ uniformly with respect to $\alpha\to0$ and by Banach-Alaoglu theorem, for any sequences $\alpha_n\to0$ and $\bar u_{\alpha_n}\in\Cal K_{\alpha_n}$ there is a subsequence (which we denote by $\bar u_{\alpha_n}$ again) which is convergent in the space
 $$
 \Theta:=L^\infty_{w^*,loc}(\R,\Cal H_0)\cap W^{1,\infty}_{w^*,loc}(\R,H^{-3})
 $$
 to some function $\tilde u\in\Theta$. To verify the upper semicontinuity, it is sufficient to prove that we necessarily have $\tilde u\in\Cal K_0$. This can be done exactly as in the proof of Proposition \ref{Prop3.main}. Thus, the corollary is proved.
\end{proof}

\end{document}